\renewcommand{\part}[1]{\noindent\textbf{Part #1)}}
\newcommand{\ba}{\begin{align*}}
\newcommand{\ea}{\end{align*}}
\newcommand{\Int}[1]{%
  {\kern0pt#1}^{\mathrm{o}}%
}
\newcommand{\bp}{\begin{pmatrix}}
\newcommand{\ep}{\end{pmatrix}}
\newcommand{\interior}[1]{%
  {\kern0pt#1}^{\mathrm{o}}%
}
\newcommand{\proj}{\pi} 
\newtheorem{theorem}{Theorem}[section]
\newtheorem{corollary}{Corollary}[section]
\newtheorem{lemma}[theorem]{Lemma}
\newtheorem{claim}[theorem]{Claim}
\theoremstyle{definition}
\newtheorem{definition}{Definition}[section]
\title{Generalized Augmented cellular Alternating Links in Thickened Surfaces are Hyperbolic}
\author[C. Adams]{Colin Adams}
\address{Department of Mathematics and Statistics, Williams College, USA}
\email{cadams@williams.edu}
\author[M. Capovilla-Searle]{Michele Capovilla-Searle}
\address{Department of Mathematics, 14 MacLean Hall, Iowa City, Iowa 52242-1419}
\email{michele-capovilla-searle@uiowa.edu}
\author[D. Li]{Darin Li}
\address{4592 Terra Pl., San Jose, CA95121}
\email{darinli@gmail.com}
\author[Q. Li]{Qiao Li}
\address{Department of Mathematics, University of California, Berkeley, 970 Evans Hall, Berkeley, CA 94720-3840}
\email{lilyli@berkeley.edu}
\author[J. McErlean]{Jacob McErlean}
\address{120 Science Drive, 117 Physics Building, Campus Box 90320, Durham, NC 27708-0320}
\email{jacob.mcerlean@duke.edu}
\author[A. Simons]{Alexander Simons}
\address{6903 Rosemont Drive, McLean VA 22101}
\email{ads4@williams.edu}
\author[N. Stewart]{Natalie Stewart}
\address{Mathematics Department, Harvard University, 1 Oxford St, Cambridge, MA 02138}
\email{nataliestewart@math.harvard.edu}
\author[X. Wang]{Xiwen Wang}
\address{Room 501, No. 3, Lane 555, Guangyan Road, Shanghai, China 200072}
\email{xiwenw2@gmail.com}
\begin{document}  

\begin{abstract} Menasco  proved that nontrivial links in the 3-sphere with connected prime alternating non-2-braid projections are hyperbolic. This was further extended to augmented alternating links wherein non-isotopic trivial components bounding disks punctured twice by the alternating link were added. Lackenby proved that the first and second collections of links together form a closed subset of the set of all finite volume hyperbolic 3-manifolds in the geometric topology. Adams showed hyperbolicity for generalized augmented alternating links, which  include additional trivial components that bound n-punctured disks for $n \geq 2$. Here we prove that generalized augmented cellular alternating links in I-bundles over closed surfaces are also hyperbolic and that in $S \times I$, the cellular alternating links and the augmented cellular alternating together form a closed subset of finite volume hyperbolic 3-manifolds in the geometric topology. Explicit examples of additional links in $S \times I$ to which these results apply are included.
\end{abstract}

\maketitle


\section{Introduction}



In a fundamental paper \cite{Menasco}, Menasco proved that a link in the 3-sphere with a reduced alternating projection that is connected, obviously prime, and not a 2-braid link projection has hyperbolic complement. 
Additionally, in \cite{Adams1}, it was proved that one can augment this link with a collection of nonisotopic trivial components that bound disks perpendicular to the projection plane such that the disks are punctured twice by the link and such that the new components puncture the projection plane in two points that are in non-adjacent complementary regions of the projection plane, and the resulting link is also hyperbolic.\\

These augmented alternating links have been investigated in a number of settings (e.g. \cite{Purcell} for background). In particular, it was shown that the the union of augmented alternating links with alternating links forms a closed subset of the set of all complete finite volume hyperbolic 3-manifolds in the geometric topology \cite{Lackenby}. The hyperbolicity results were further extended to generalized alternating links that allow the new components to bound disks with more than two punctures in \cite{Adams2}.\\

In \cite{SMALL2017} and \cite{HP}, results analogous to Menasco's results in the 3-sphere \cite{Menasco} were proved for links living in thickened surfaces, where now we project to the intermediate surface. There are various reasons to be interested in links living in thickened surfaces, but a primary one is that this is the appropriate ambient space for understanding virtual knots. See \cite{KK}, \cite{CKS} for the realization of virtual knots as knots in thickened surfaces and \cite{virtualknotssummer}  for hyperbolicity of virtual knots when realized in thickened surfaces. See also \cite{Turaevsummer} for knots living in thickened Turaev surfaces.\\

We extend results concerning generalized augmented alternating links \cite{Adams1,Adams2} to thickened surfaces via the following definitions.
Throughout this paper, we assume we are in the piecewise-linear category. 

\begin{definition}
Let $S$ be a closed surface and let $N$ be an I-bundle over $S$. Let $\proj: N \rightarrow S$ be the projection map that collapses $I$ over each point on $S$. 
A link $L \subset N$ has a  \emph{cellular alternating projection} if it has a projection to $S$ that is alternating and such that the complementary regions are all topological disks. (In previous papers, this was called a fully alternating projection, however this causes confusion with fully augmented alternating links, hence the change in terminology here).
\end{definition}


\begin{definition}
A link projection is \emph{reduced} if there is no disk in the projection surface intersecting the projection as in Figure \ref{reduced}.
\end{definition}

\begin{figure}[h]
    \centering
    \includegraphics[scale=.5]{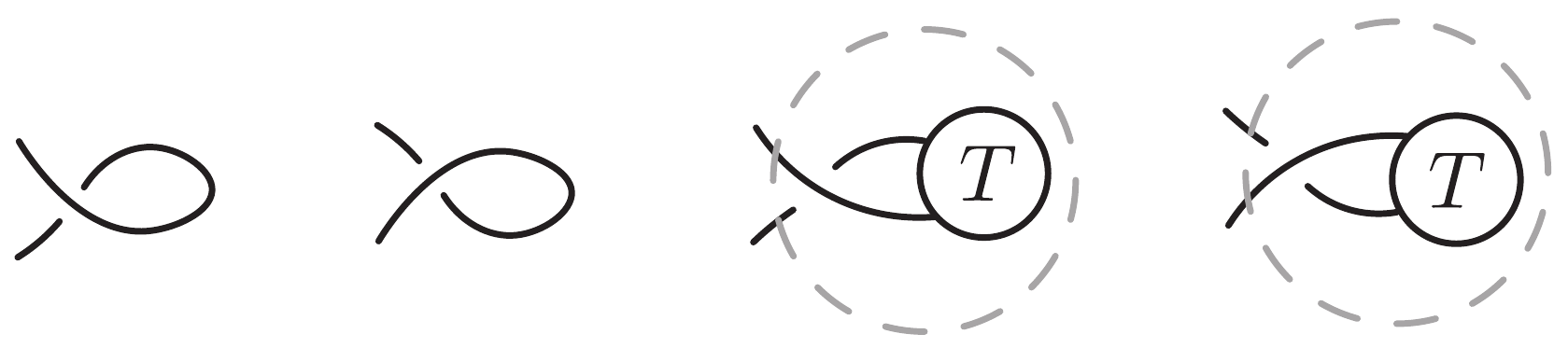}
    \caption{Crossings that can be reduced. Here, $T$ refers to the portion of the link projection contained in a disk in the projection surface whose boundary intersects the link exactly twice.}
    \label{reduced}
\end{figure}

In \cite{SMALL2017}, the following two theorems were proved.

\begin{theorem}
A link $L$ in an I-bundle $N$ over $S$ that has a reduced cellular alternating projection is \emph{prime} if its projection $P$ onto $S$ is \emph{obviously prime}, i.e., there do not exist circles in $S$ crossing $P$ transversely twice and bounding a disk in $S \times \{\frac{1}{2}\}$ that contains at least one crossing of $L$.
\end{theorem}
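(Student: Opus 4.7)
The plan is to adapt Menasco's bubble argument \cite{Menasco} to the thickened surface $N$. Suppose for contradiction that there exists a 2-sphere $F \subset N$ meeting $L$ transversely in exactly two points but not bounding a 3-ball in $N$ that meets $L$ in an unknotted arc. At each crossing $c$ of $P$, place a small 3-ball $B_c$ (a ``bubble'') with $B_c \cap S$ a disk neighborhood of $c$, with $\partial B_c \cap S$ a circle $\sigma_c$ meeting $P$ in four points, and with the over- and under-strands of $c$ properly embedded in $B_c$ on opposite sides of $S$. After isotoping $F$ into general position, the standard saddle-exchange and innermost-disk simplifications of Menasco allow us to assume that $F \cap B_c$ is a disjoint union of saddle disks (each separating the over-arc from the under-arc) and that $F \cap S$ is a disjoint union of simple closed curves. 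Minimize lexicographically the number of components of $F \cap S$, then the total number of saddles.

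The key adaptation to the thickened surface is the following observation: since $F$ is a 2-sphere, each component $\gamma$ of $F \cap S$ bounds a disk in $F$ on each side. Taking an innermost such disk, pushing it slightly off $S$, and then composing with the I-bundle retraction $\proj: N \to S$ shows that $\gamma$ is null-homotopic on $S$ and hence bounds a disk $E_\gamma \subset S$. This substitutes for the Jordan curve theorem on $S^2$ used by Menasco.

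Now take a component $\gamma$ of $F \cap S$ crossing the fewest bubble equator arcs. If $\gamma$ crosses no bubble equator, then $\gamma$ is disjoint from $P$, so it lies inside a single complementary region of $P$, which is a disk by the cellular hypothesis. An innermost disk of $F$ bounded by $\gamma$ then allows an isotopy of $F$ reducing $|F \cap S|$, contradicting minimality. If $\gamma$ crosses exactly two bubble equator arcs, then $E_\gamma$ meets $P$ transversely in exactly two points; by the obviously prime hypothesis, $E_\gamma$ contains no crossing of $P$. But then $P \cap E_\gamma$ is a single embedded arc of $P$, and using the saddle structure together with the reduced and alternating hypotheses one can isotope $F$ across $E_\gamma$ to reduce the number of saddles, again contradicting minimality. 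For curves crossing four or more bubble equators, one applies Menasco's alternating sliding argument: the alternation of the projection inside $E_\gamma$ permits a shortening of $\gamma$ to one of the previous cases.

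The main obstacle, and the only substantive departure from Menasco's original argument, is establishing that the intersection curves on $S$ bound disks on $S$, which uses that $F$ is a sphere and the deformation retraction $N \to S$, not the topology of the projection surface. Once this is in place, the cellular hypothesis (complementary regions are disks) plays the role of the analogous fact in $S^2$, and the obviously prime hypothesis handles the length-two case, so Menasco's case analysis carries through essentially verbatim to force $F$ to bound a ball containing an unknotted arc of $L$, contradicting the choice of $F$.
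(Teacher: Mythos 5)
The paper does not actually give a proof of this theorem: it is stated as one of the two results quoted from \cite{SMALL2017}, so there is no in-paper argument against which to compare yours. Evaluated on its own, your approach --- adapting Menasco's bubble and saddle argument, with the I-bundle retraction $\proj\colon N \to S$ supplying the fact that each component of $F \cap S$ is null-homotopic on $S$ and hence bounds a disk --- is the correct and expected strategy, and the retraction observation is indeed the substantive new ingredient. One small simplification: the ``innermost'' restriction is unnecessary for that step, since \emph{every} component $\gamma$ of $F \cap S$ bounds a disk in the sphere $F$, and retracting any such disk to $S$ already exhibits a null-homotopy of $\gamma$.

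The weak point is the length-two case, where the statement is imprecise in a way that obscures exactly what the hypotheses are doing. You write that if $\gamma$ crosses exactly two bubble equator arcs, then $E_\gamma$ meets $P$ transversely in exactly two points. First, $E_\gamma$ is a disk and $P$ a 4-valent graph on $S$, so they meet along arcs, not points; you mean $\gamma = \partial E_\gamma$. Second, ``crossing a bubble equator arc'' and ``crossing $P$ transversely'' are different counts in Menasco's normal form: the former records that $\gamma$ passes over a crossing that therefore lies \emph{inside} $E_\gamma$, while the latter records transverse intersections of $\gamma$ with the edges of $P$ away from bubbles (including the points of $F \cap L$). The obviously-prime hypothesis applies to the latter count, not the former, and the whole point is that a curve $\gamma$ meeting $P$ transversely twice and bounding a disk containing a crossing would violate obvious primeness. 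Your sentence as written asserts precisely the configuration obvious primeness rules out and then cites obvious primeness to say it has no crossings inside, which is circular as phrased. To make the case analysis close, the curves need to be organized by the Menasco ``word'' recording both bubble crossings and $L$-punctures, with obvious primeness eliminating the word-length-two curves enclosing a crossing, the cellular hypothesis handling length zero, and alternation and reducedness driving the saddle-count reduction for longer words. Your outline has the right pieces, but these distinctions need to be kept straight for the argument to go through.
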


In the literature,  ``obviously prime'' is sometimes called ``weakly prime''.

\begin{theorem}
If $T$ is a torus, a link in $T \times (0,1)$  that has a reduced prime cellular alternating projection has hyperbolic complement. Similarly, if $S$ is a closed orientable surface of genus at least two, then a link in $S \times I$ that has a reduced prime cellular alternating projection has a hyperbolic complement such that $S \times \partial I$ is totally geodesic.
\end{theorem}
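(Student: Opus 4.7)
The plan is to invoke Thurston's hyperbolization theorem for Haken 3-manifolds: a compact orientable irreducible Haken 3-manifold with incompressible boundary is hyperbolic if and only if it is atoroidal and anannular, and moreover boundary components of genus at least two can be realized as totally geodesic surfaces. For $T \times (0,1)$ the complement $N \setminus L$ has toroidal boundary (meridional tori around $L$ together with $T \times \partial I$), so this applies directly. In the higher-genus case, the totally geodesic conclusion for $S \times \partial I$ is extracted from the $\mathbb{Z}/2$-symmetry of $N \setminus L$ that reflects across the projection surface $S \times \{1/2\}$: by Mostow--Prasad rigidity this combinatorial symmetry is realized by an isometric involution whose fixed locus contains $S \times \{1/2\}$, forcing the two copies of $S \times \partial I$ to be totally geodesic.

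The main technical tool is Menasco's standard position argument, adapted from $S^3$ to $S \times I$. Given any essential surface $F \subset N \setminus L$, I would isotope it so that its intersection with the projection surface $S \times \{1/2\}$ is a disjoint union of simple closed curves away from the crossings, together with the standard saddle configurations at each crossing ball. The cellular hypothesis is crucial: it guarantees that each complementary region of the projection in $S$ is a disk, so Menasco's classical innermost-disk arguments can be executed region by region essentially as in $S^3$. I would then verify the four hyperbolization hypotheses in turn. Irreducibility and incompressibility of the meridional boundary follow from the obviously prime hypothesis, since an innermost intersection circle of an essential sphere or compressing disk yields a loop in $S$ meeting the projection at most twice and bounding a disk containing a crossing, contradicting primeness. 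Incompressibility of $S \times \partial I$ uses cellularity directly: an innermost compression would produce a loop in $S \times \{1/2\}$ bounding a disk in $S$ disjoint from the projection, which is impossible when every complementary region meets the projection on its boundary. Atoroidality follows from the same toolkit, with tori parallel to $S$ ruled out by the presence of crossings.

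The main obstacle will be ruling out essential annuli, which is the subtlest part of Menasco's original argument and which acquires new complications in the thickened surface setting. Annuli with both boundary curves on meridians of $L$ reduce to the classical analysis via reducedness and the cellular hypothesis. The genuinely new case is annuli with one or both boundary circles lying on $S \times \partial I$, since such circles may wrap nontrivially around $S$ in ways unavailable in the $S^3$ setting. The plan here is to double such an annulus across $S \times \{1/2\}$ using the reflection symmetry, producing a closed essential surface to which the standard-position innermost-disk arguments apply, and then to extract a contradiction with the reduced, obviously prime, cellular alternating structure. Tracking the annulus boundary combinatorics and their interaction with the alternating projection is where I expect the bulk of the work to concentrate.
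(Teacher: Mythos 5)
This theorem is quoted in the paper from \cite{SMALL2017} and is not reproved here, so there is no in-paper proof to check your argument against. For what it is worth, your overall strategy---Thurston's hyperbolization for Haken manifolds, combined with an adaptation of Menasco's standard-position and innermost-disk analysis in which the cellular hypothesis localizes the arguments to disk regions of the projection---is the approach taken in \cite{SMALL2017} (and, in a more general setting, in \cite{HP}), so the bulk of your plan is aligned with the source.

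There is, however, a genuine gap in the way you extract the totally geodesic conclusion and in the way you propose to rule out annuli meeting $S \times \partial I$. Both rely on a reflection symmetry of $S \times I \setminus L$ across $S \times \{1/2\}$, but a general reduced prime cellular alternating link $L$ has no such symmetry: the reflection $(x,t) \mapsto (x,1-t)$ carries $L$ to its mirror, with every crossing reversed, which is typically not isotopic to $L$. (The paper's rubber band links in Section~\ref{rubber band section} are singled out precisely because they \emph{do} enjoy this symmetry, and this is presented there as a special feature, not as something automatic.) Even if such a symmetry existed, its fixed locus would be the punctured projection surface $S \times \{1/2\} \setminus L$, not $S \times \partial I$; the reflection \emph{swaps} the two boundary components, and a pair of surfaces interchanged by an isometric involution need not be totally geodesic. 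The correct and standard route, which the paper itself records immediately after stating the theorem, is that Thurston's hyperbolization already delivers the totally geodesic realization of boundary components of genus at least two once essential spheres, disks, tori, and annuli are excluded, so no auxiliary symmetry argument is needed. Likewise, annuli with boundary on $S \times \partial I$ should be handled either directly in standard position or by doubling the manifold along $S \times \partial I$ (not by reflecting across the projection surface), so that the doubled annulus becomes a closed surface in a manifold to which the torus/annulus exclusion already applies.
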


These results were appropriately extended to non-orientable surfaces and twisted $I$-bundles over surfaces. Similar results were proved in a more general setting in \cite{HP}.\\

In this paper, we extend the category of generalized augmented cellular alternating links to such links in any $I$-bundle over any closed surface that is either orientable or non-orientable.

\begin{definition}
A {\it generalized augmented cellular alternating link} $Q$ in an I-bundle $N$ over a closed surface $S$ is obtained by starting with a link $L$ in $N$ that has a projection to $S$ that is reduced, prime and cellular alternating. Then we augment $L$ with trivial components perpendicular to the projection surface that bound disks punctured two or more times by $L$ and such that the new components puncture the projection surface in two nonadjacent complementary regions. Furthermore, if a pair of the trivial components intersect the projection surface in the same pair of complementary regions, they are not isotopic. We also assume the punctured disks are disjoint.
\end{definition}

Fix an $I$-bundle $N$ over a closed surface $S$.
Define $M$ as follows. If $N$ has spherical boundaries, cap them off with balls. If $N$ has genus one boundaries, shave the boundaries off. Otherwise, $M = N$. 


\begin{theorem}\label{general theorem}
Let $Q$ be a generalized augmented cellular alternating link in $N$, an $I$-bundle over a closed surface $S$, excluding $S \cross I$ when $S$ is a projective plane. Then $M \setminus Q$ has a complete finite volume hyperbolic metric such that the boundary of the $I$-bundle is totally geodesic, unless one of the following occurs:
\begin{enumerate} 
\item $S$ is a sphere and $L$ is a 2-braid knot or link.
\item $S$ is a projective plane and $L$ is a 2-braid link.
\item $S$ is a projective plane and there exists a simple closed curve on $S$ that intersects the projection once.
\end{enumerate}

If $\chi(S) < 0$, then $\partial M$ can be realized as totally geodesic surfaces in the hyperbolic metric on $M \setminus Q$.
\end{theorem}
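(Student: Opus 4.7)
The plan is to apply Thurston's hyperbolization theorem for Haken 3-manifolds to $M \setminus Q$, which reduces the problem to verifying that $M \setminus Q$ is irreducible, boundary-incompressible, atoroidal, and anannular, with the three exceptional configurations being exactly the places where the anannular conclusion fails. The overall strategy is to bootstrap off the hyperbolicity of cellular alternating complements established in \cite{SMALL2017,HP} by adjoining the meridional punctured disks spanned by the augmenting components, following the template used by Adams in \cite{Adams1,Adams2} for links in the $3$-sphere.

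First, I would fix a \emph{standard position} for a putative essential surface $F \subset M \setminus Q$ with respect both to the projection surface $S \times \{1/2\}$ and to the collection of punctured disks $D_1, \ldots, D_k$ bounded by the augmenting components, minimizing the total number of intersection curves and arcs. The disks $D_i$ are themselves essential, and cutting $M \setminus Q$ along all $D_i$ yields a disjoint union of pieces each of which is homeomorphic rel boundary to a submanifold of the complement of the underlying cellular alternating link $L$ in $N$. Consequently the cited hyperbolicity of $N \setminus L$ forces any essential surface in such a piece to be peripheral, which is the engine that drives the whole argument.

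The case analysis then proceeds by type. Essential spheres are ruled out by the irreducibility of each piece together with a combinatorial inspection of how a sphere could intersect the punctured disks. A boundary-compressing disk in $M \setminus Q$ would either yield a boundary compression for the cellular alternating projection of $L$ (contradicting primality) or force two augmenting components to puncture $S$ in adjacent complementary regions (contradicting the definition of a generalized augmented cellular alternating link). Essential tori are eliminated by a standard innermost-disk argument along $\bigcup_i D_i$ reducing to atoroidality of $N \setminus L$. The main obstacle is the annular case: I would classify essential annuli by their boundary components among $\partial M$, $\partial \nu(L)$, and the cusps of the augmenting components, and show that outside the three exceptional configurations every such annulus either compresses or is boundary-parallel. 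Exceptions (1)--(3) correspond precisely to the combinatorial situations in which an annulus runs between two meridians through a single crossing column or along a one-sided curve meeting the projection once on $\RP^2$, which is why they must be excluded.

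Finally, the totally geodesic conclusion when $\chi(S) < 0$ follows from the doubling trick: doubling $M \setminus Q$ along $\partial M$ yields a Haken $3$-manifold on which the doubling involution acts with fixed set $\partial M$; the first part of the theorem applied to the double gives a hyperbolic structure, and Mostow rigidity then realizes the involution by an isometry, so $\partial M$ sits in $M \setminus Q$ as a totally geodesic surface.
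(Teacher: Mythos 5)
Your proposal follows the Adams \cite{Adams1,Adams2} template for $S^3$ --- put an essential surface in standard position with respect to the punctured disks bounded by the augmenting components, cut along them, and reduce to the hyperbolicity of $N\setminus L$ --- but the paper takes a genuinely different route, and I think your route has real gaps in this setting.

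First, the central claim driving your argument, that cutting $M\setminus Q$ along all the punctured disks $D_i$ produces pieces ``homeomorphic rel boundary to a submanifold of the complement of $L$,'' is neither justified nor obviously true. Cutting a link complement along a $k$-punctured disk spanned by an unknotted component changes the topology in a way that does not simply embed the pieces back into $N\setminus L$; and even if an essential surface $F$ is isotoped off all the $D_i$, it still lives in the complement of $Q$ (including the $J_i$), not of $L$. You would need a precise structural lemma here, and in the thickened surface setting it is not clear one exists. The paper sidesteps this entirely by a different engine: it lifts to the universal cover $\mathbb{R}^2\times I$, where a compact essential surface sits inside finitely many fundamental domains, closes off the truncated diagram with ``embroidery'' arcs to build a reduced prime alternating diagram in $S^3$, and then cites the already-known hyperbolicity of generalized augmented alternating links in $S^3$ to rule out the lifted surface.

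Second, your treatment of essential tori --- ``a standard innermost-disk argument along $\bigcup_i D_i$ reducing to atoroidality of $N\setminus L$'' --- elides the hardest case. In $S\times I$ an essential torus in $M\setminus Q$ need not be compressible in $S\times I$ (when $S$ is a torus it can be parallel to $S\times\{0\}$), and when it is compressible it can bound a solid torus whose core is not homotopically trivial in $S\times I$. In the latter case the torus lifts to an infinite tube in the universal cover and spans infinitely many fundamental domains, so no finite ``innermost disk'' analysis along the $D_i$ will reduce it to a question about $N\setminus L$. The paper handles this via a second, annular embroidery construction (closing the diagram in an annulus rather than a disk and augmenting once more through the hole). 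You would need something comparably delicate.

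Third, Thurston's hyperbolization theorem as you invoke it applies to compact \emph{orientable} Haken manifolds. When $N$ is non-orientable or a twisted $I$-bundle, $M\setminus Q$ need not be orientable, so you cannot apply the criterion directly; the paper passes to an orientable double cover, checks the lifted link is again a generalized augmented cellular alternating link, proves hyperbolicity there via the orientable theorem, and then descends the metric by Mostow rigidity. Your proposal omits this reduction entirely.

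Finally, the exceptional cases (1)--(3) are not really annulus phenomena specific to the augmented link; they are inherited verbatim from the underlying cellular alternating link $L$ failing to be hyperbolic (the 2-braid cases are Seifert fibered, and the $\mathbb{RP}^2$ odd-intersection case is a reducedness failure in the non-orientable setting). The totally geodesic conclusion for $\chi(S)<0$ does not need a separate doubling argument --- it is already part of the version of Thurston's theorem cited by the paper, which gives totally geodesic boundary for genus $\geq 2$ --- though your doubling/Mostow argument is a valid alternative way to see it.
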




To prove Theorem \ref{general theorem}, we will apply Thurston's hyperbolicity criterion for 3-manifolds, which concerns essential surfaces.

\begin{definition} 
A properly embedded surface $S$ in a compact 3-manifold $M$ is \emph{incompressible} if either $S$ is a sphere that does not bound a ball in $M$ or $S$ not a sphere and there does not exist a disk $D$ in $M$ intersecting $S$ in $\partial D$ such that $\partial D$ is a nontrivial curve in $S$. The surface $S$ is \emph{boundary-incompressible} if there does not exist a disk $D$ with boundary consisting of two arcs $\alpha$ and $\beta$ such that $D \cap S = \alpha$, $D \cap \partial M = \beta$ and $\alpha$ does not cut a disk from $S$.
The surface is \emph{boundary-parallel} if it is isotopic into the boundary of the manifold relative to its boundary.
A properly embedded surface in a compact manifold is \emph{essential} if it is incompressible and not boundary-parallel. 
\end{definition}

Thuston proved that if a compact orientable manifold with non-spherical boundaries contains no essential disks, spheres, tori or annuli, it is hyperbolic. Further any boundaries of genus at least two can be taken to be totally geodesic. (See \cite{Thurston}) It is this theorem that we will apply to prove Theorem \ref{general theorem}.\\ 

Note that a properly embedded disk $D$ is always boundary-incompressible, since $\alpha$ will always cut a disk from $D$.  Further, in the case of a properly embedded annulus in a manifold with no essential spheres, boundary-compressibility implies the annulus is boundary-parallel. Hence, once we prove there are no essential spheres, we will not need to consider boundary-incompressibility. \\

Theorem \ref{orientable theorem} in Section \ref{orientable section} discusses the case of Theorem \ref{general theorem} for thickened orientable surfaces, and then,  in Section \ref{non-orientable section}, we extend to non-orientable surfaces and to twisted I-bundles.\\

In Section \ref{rubber band section}, we utilize the result to prove hyperbolicity for rubber band links, a natural class of non-alternating links in thickened surfaces. In Section \ref{applications section}, using volume bounds from \cite{HP}, we extend results of Lackenby \cite{Lackenby} to show that in the thickened surface $S\cross I$, the union of all hyperbolic cellular alternating links and augmented cellular alternating links form a closed subset of the finite volume hyperbolic 3-manifolds with all boundary totally geodesic in the geometric topology. Further, we apply previous theorems from \cite{Lackenby}, \cite{KP}, and \cite{Kwon} to obtain upper and lower bounds on the volumes of rubber band links.\\


We will jump between the complement of a link $L$ in a manifold, denoted $M \setminus L$ and the complement of the interior of a regular neighborhood of the link, denoted $M \setminus \mathring{N}(L)$ when we need to talk about properly embedded surfaces with boundary. 
\section{Generalized Augmented cellular Alternating Links in Thickened Orientable Surfaces}\label{orientable section}

\begin{definition}\label{main definition}
Let $L$ be a reduced prime cellular alternating link in $S \times I$, where $S$ is a closed orientable surface. Define $M$ as follows. When $S$ is a sphere, we cap off the surfaces $S \times \{0\}$ and $S \times \{1\}$ in $S \times I$ by balls to obtain $M = S^3$. When $S$ is a torus, we define $M = S \times (0,1)$. Otherwise we define $M = S \times I$. 
Considering the projection of $\proj(L) \subset S \times \{\frac{1}{2}\}$ as a 4-regular graph, the graph cuts $S \times \{\frac{1}{2}\}$ into complementary regions, all of which are topological disks.\\

Let $J_1, ..., J_n$ be $n$ loops in $S \times I \setminus L$ which are trivial in $S \times I$ such that

\begin{enumerate}
    \item each $J_i$ intersects $S \times \left\{\frac{1}{2}\right\}$ in exactly two points, one in each of two non-adjacent complementary regions,
    \item if two $J_i$ intersect $S \times \left\{\frac{1}{2}\right\}$ in the same pair of regions,  they are not isotopic in $S \times I \setminus L$. 
    \item each $J_i$ bounds a disk $E_i$ perpendicular to $S \times \left\{\frac{1}{2}\right\}$ in $S \times I$ where $E_i \cap E_j = \varnothing$ for $i \neq j$.
\end{enumerate}

Then we call the link $Q = L \cup \left(\bigcup\limits_{i=1}^n J_i\right)$ a \textit{generalized augmented cellular alternating link} in $M$, and call each $J_i$ an \textit{augmenting component}.
\end{definition}

Figure \ref{augmented example} gives an example.

\begin{figure}[h]
    \centering
    \includegraphics[scale=.5]{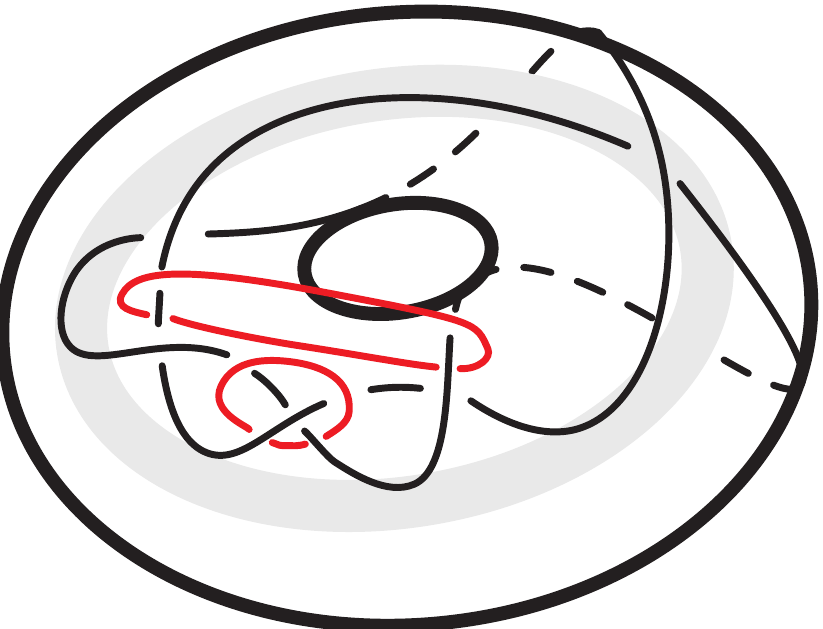}
    \caption{A generalized augmented cellular alternating link.}
    \label{augmented example}
\end{figure}


\begin{theorem}\label{orientable theorem}
Let $Q$ be a generalized augmented cellular alternating link in $M$ as obtained from $S \times I$ and $L$ as above,  where $S$ is a closed orientable surface, other than $L$ a 2-braid with $S = S^2$. Then $M \setminus Q$ has a complete hyperbolic metric with finite hyperbolic volume. When $S$ has genus at least 2, both of the surfaces $S \times \{0\}$ and $S \times \{1\}$ are totally geodesic in the metric.
\end{theorem}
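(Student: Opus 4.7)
The plan is to verify Thurston's hyperbolicity criterion for $M \setminus \mathring{N}(Q)$ by ruling out essential spheres, disks, tori, and annuli, adapting the strategy of Menasco \cite{Menasco} and its augmented extensions \cite{Adams1, Adams2} to the thickened surface setting, where the non-augmented case has already been settled by the prime/hyperbolicity results of \cite{SMALL2017}.

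The central technique is to put any hypothetical essential surface $F$ into a \emph{standard position} with respect to the 2-complex $P \cup E_1 \cup \cdots \cup E_n$, where $P = S \times \{1/2\}$ is the projection surface and each $E_i$ is the augmenting disk bounded by $J_i$. After a transverse isotopy, $F \cap E_i$ and $F \cap P$ are collections of simple closed curves and properly embedded arcs; I then isotope $F$ to minimize the total intersection number $|F \cap P| + \sum_i |F \cap E_i|$.

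For essential spheres and disks, an innermost-disk argument on the $E_i$ removes all intersections with the augmenting disks, and then the analogous argument on $P$ reduces the problem to producing an essential sphere or disk in $M \setminus L$; this contradicts \cite{SMALL2017}, with the excluded 2-braid case on $S^2$ being exactly the exception listed in the theorem. For essential tori and annuli, I examine an innermost circle or outermost arc of $F \cap E_i$. An innermost circle bounds a subdisk of $E_i$ that either contains a puncture of $L$ (forcing an impossible intersection with $Q$) or lets us surger $F$ to decrease intersections. An outermost arc cuts off a subdisk of $E_i$ whose other boundary arc lies on $F$; using this disk to boundary-compress or isotope $F$ either produces an essential compressing disk already ruled out, or violates minimality. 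The hypotheses that the punctures of $E_i$ lie in non-adjacent complementary regions and that distinct $J_i, J_j$ puncturing the same pair of regions are non-isotopic are precisely what prevents outermost arcs from encoding hidden reductions that do not genuinely lower intersection number.

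The main obstacle will be the case analysis for essential tori and annuli whose boundary lies on $\partial N(J_i)$: here I must leverage the requirement that $E_i$ is punctured at least twice, together with the primeness and reducedness of the projection of $L$, to rule out meridional annuli along the $J_i$ and show any remaining annulus is boundary-parallel. Once all intersections with every $E_i$ are eliminated, $F$ sits inside $M \setminus L$ with the $J_i$ as additional unknotted loops, and the essential-surface analysis of \cite{SMALL2017} completes the contradiction. Thurston's theorem then delivers a complete finite-volume hyperbolic metric on $M \setminus Q$, and when $\chi(S) < 0$ the boundary surfaces $S \times \partial I$ have genus at least two and can be realized as totally geodesic.
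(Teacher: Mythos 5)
Your proposal takes a genuinely different route from the paper, and the difference matters. You propose the classical Menasco-style argument: put the hypothetical essential surface $F$ into standard position with respect to the 2-complex $P \cup \bigcup E_i$, minimize intersections, and use innermost-disk/outermost-arc moves to push $F$ off the augmenting disks, reducing to the known hyperbolicity of $M \setminus L$ from \cite{SMALL2017}. The paper instead avoids intersection minimization entirely: it lifts to the universal cover $\mathbb{R}^2 \times I$, chooses a large disk $D'$ made of fundamental domains that engulfs the (compact) lift of $F$, and then \emph{embroiders} the ends of the lifted projection on $\partial D'$ to close $\tilde{P}\cap D'$ into a reduced, prime, non-2-braid alternating diagram in the plane. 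Adding the lifted augmenting circles produces a generalized augmented alternating link $Q'$ in $S^3$, which is hyperbolic by \cite{Adams2}; the lift $\tilde F$ then cannot be essential in $S^3 \setminus Q'$, and the contradiction pushes back down. For tori whose core curve is homotopically nontrivial in $S\times I$, the paper needs an entirely separate second construction (embroidery on an annulus obtained from a carefully chosen pair of identified lifts of a curve $\alpha$ transverse to the core), because the lifted solid torus is no longer compact in the plane. The paper's approach buys modularity (it leans on the $S^3$ theorem as a black box) at the price of the elaborate embroidery constructions; yours, if it worked, would be more self-contained.

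However, your sketch has real gaps. First, the last step — ``once all intersections with every $E_i$ are eliminated, $F$ sits inside $M \setminus L$ and the essential-surface analysis of \cite{SMALL2017} completes the contradiction'' — is not correct as stated. Even after $F$ is made disjoint from all $E_i$, the surface $F$ may be compressible or boundary-parallel in $M \setminus L$ and still be essential in $M \setminus Q$, because augmenting components $J_i$ can live in the region that would otherwise certify inessentiality. The actual contradiction in such configurations comes from the hypothesis that each $E_i$ is punctured at least twice by $L$: if $F$ bounds a ball or solid torus $V$ with some $J_i \subset V$ but $L \cap V = \varnothing$, then since $E_i$ is disjoint from $F$ and $\partial E_i = J_i \subset V$, we get $E_i \subset V$, so $E_i$ has no punctures, contradicting Definition \ref{main definition}. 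This is not a consequence of \cite{SMALL2017}, it is a separate argument you must make, and it must be made for each of the sphere, torus, and annulus cases. Second, the innermost/outermost reduction itself is underspecified in the torus case: circles of $T \cap E_i$ that bound subdisks of $E_i$ containing exactly one puncture do not automatically give a parity contradiction on $T$ the way they do on a sphere, and the interaction between intersections with $P$ and with the $E_i$ requires care (after surgering along an $E_i$-innermost disk, the surface's intersection with $P$ changes). The paper's authors evidently judged these difficulties severe enough to abandon direct minimization in favor of the lift-and-embroider strategy; your outline does not address why the minimization will always terminate or why the residual configurations can be ruled out.
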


In order to prove this theorem, we will prove a sequence of lemmas. We assume throughout that the genus of $S$ is at least one, since the theorem is known when $S$ is a sphere and therefore $M = S^3$ (see \cite{Adams2}). As mentioned previously, we rely on Thurston's theorem which says that in the case of  a link in a compact 3-manifold, the complement is hyperbolic if and only if it contains no essential disks, spheres, tori, or annuli \cite{Thurston}. \\

Note that we do not need to consider essential disks separately, since $\partial(S \times I)$ is incompressible in $S \times I$, so there are no essential disks with boundary on $\partial(S \times I)$.  Further, by hyperbolicity of the complement of $L$, there are no essential disks with boundary on $\partial N(L)$, where $N$ is a regular neighborhood of $L$. So if there is an essential disk $D$, it must have nontrivial boundary on  $N(J)$. Then, $\partial N(D \cup J)$ is an essential sphere, which we will eliminate.

\begin{lemma}\label{lift lemma}
Suppose a link $L$ in $S \cross I$ has a projection $P$ that is reduced, cellular alternating, and obviously prime in $M$. Then $\tilde{P}$, the lift of $P$ in the universal cover of $M$, is also connected, reduced, alternating, and obviously prime.
\end{lemma}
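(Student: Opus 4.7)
The plan is to verify each of the four properties in turn, exploiting that the covering map $\pi\colon \tilde{S}\to S$ is a local homeomorphism and that $\tilde P=\pi^{-1}(P)$. The \emph{alternating} condition is local at each crossing, so it lifts unchanged to every preimage of a crossing. The \emph{connected} condition follows from cellularity: since the complementary regions of $P$ in $S$ are disks, $S$ admits a CW structure with $1$-skeleton $P$, forcing $\pi_1(P)\twoheadrightarrow\pi_1(S)$, and by the standard correspondence between cosets of the image of $\pi_1(P)$ in $\pi_1(S)$ and components of $\pi^{-1}(P)$, the preimage $\tilde P$ is connected. For \emph{reduced}, I would use the local characterization that a crossing $c$ of a $4$-valent alternating graph on a surface is nugatory exactly when two of the four region-germs at $c$ that sit in opposite quadrants belong to the same global complementary region. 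Because each disk region $R\subset S\setminus P$ lifts to a disjoint union of homeomorphic disks, the four region-germs at any lift $\tilde c$ of $c$ project bijectively onto the four region-germs at $c$; hence two opposite germs at $\tilde c$ lie in the same region of $\tilde S\setminus\tilde P$ only if their images at $c$ lie in the same region of $S\setminus P$, so any nugatory crossing of $\tilde P$ would descend to one for $P$, contradicting reducedness.

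The remaining condition, \emph{obvious primeness}, is the technical heart. Suppose for contradiction that $\tilde P$ admits a simple closed curve $\tilde\gamma\subset\tilde S$ meeting $\tilde P$ transversely in exactly two points and bounding a disk $\tilde D$ containing at least one crossing; among all such ``bad'' disks, I would choose $\tilde D$ to minimize the number of crossings of $\tilde P$ inside. Set $\gamma=\pi(\tilde\gamma)$. If $\gamma$ is already embedded in $S$, then $\gamma$ is a null-homotopic simple closed curve and therefore bounds a unique disk $D\subset S$; since $\tilde\gamma$ bounds the unique disk $\tilde D$ in the simply connected surface $\tilde S$, this $\tilde D$ must coincide with a lift of $D$, so $\pi$ restricts to a homeomorphism $\tilde D\to D$. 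The resulting disk $D$ then witnesses failure of obvious primeness of $P$, a contradiction.

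\textbf{The main obstacle} is the case when $\gamma$ self-intersects, for then some nontrivial deck translate $g\tilde\gamma$ meets $\tilde\gamma$. Since the deck action on $\tilde S$ is free, Brouwer's fixed-point theorem rules out $g\tilde D\subseteq\tilde D$, so $\partial\tilde D$ and $\partial(g\tilde D)$ must cross transversely. I would then perform an innermost disk surgery on the arcs of $g\tilde\gamma\cap\tilde D$: an innermost such arc $\alpha$ together with a subarc of $\tilde\gamma$ bounds a sub-disk $\tilde D'\subsetneq\tilde D$. The delicate point is to verify, via a parity argument on the four intersection points $\tilde\gamma\cap\tilde P$ and $g\tilde\gamma\cap\tilde P$ and by tracking how those intersection points distribute along the surgery arcs, that a correct choice among the resulting surgery pieces (possibly after also surgering $g\tilde D$, or passing to a component of $\tilde D\cap g\tilde D$) meets $\tilde P$ in exactly two points and still contains at least one crossing. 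Producing such a strictly smaller bad disk contradicts the minimality of $\tilde D$ and completes the proof.
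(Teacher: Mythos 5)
Your arguments for connectedness and the alternating condition agree with the paper's. For obvious primeness and reducedness, however, you take a genuinely different route whose crucial step is left unverified, and this is a real gap. The paper projects downstairs and avoids surgery entirely: given a disk $\tilde D$ in the universal cover with $\partial\tilde D$ meeting $\tilde P$ exactly twice and $\tilde D$ enclosing a crossing, it pushes $\tilde D$ forward to an immersed disk $D'\subset S$, perturbs $\gamma = \pi(\partial\tilde D)$ to be self-transverse, and notes that $\gamma$ still meets $P$ exactly twice; the self-intersections of $\gamma$ then cut $D'$ into subdisks, exactly one of which has boundary meeting $P$ (and does so twice), and since $\tilde D$ contained a crossing so does that subdisk, which is an embedded disk in $S$ violating reducedness or obvious primeness of $P$. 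No minimality hypothesis and no deck-translate surgery appear.

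The ``delicate point'' you flag in your surgery scheme is precisely where your argument fails to close. Cutting $\tilde D$ along an innermost arc $\alpha$ of $g\tilde\gamma$ yields two subdisks whose boundary intersection numbers with $\tilde P$ are $|\alpha\cap\tilde P| + |\tilde\gamma_i\cap\tilde P|$, $i=1,2$; since $|\alpha\cap\tilde P|$ can equal $2$, these counts can be $(2,4)$, and if all crossings sit in the count-$4$ piece the surgery has not produced a bad disk at all. Even when it does, the enclosed-crossing count need not strictly drop --- the cut-off piece may be a crossing-free bigon --- so minimizing crossings does not terminate the descent; you would need a refined complexity (e.g.\ lexicographic in crossings and in intersecting deck-translate arcs) together with a proof that it decreases, none of which is supplied. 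A secondary point: the paper's notion of \emph{reduced} (Figure~\ref{reduced}) also excludes crossings removable by a flype, which is not detected by your local germ criterion at the crossing; the paper's immersed-disk argument handles reducedness and obvious primeness together, which is why the two are bundled in the lemma's proof.
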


\begin{proof} 
The universal cover of $M$ is the thickened Euclidean or hyperbolic plane, which we consider as $\mathbb{R}^2 \times (0,1)$ or $\mathbb{R}^2 \times I$. 
Fix $L$ so that its projection is $P$. Since $P$ is cellular alternating, its complementary regions are disks. Therefore it is connected and so is $\proj(\tilde{L}) = \tilde{P}$. Also since $P$ is cellular alternating, $\tilde{P}$ is cellular alternating.\\

Suppose $\tilde{P}$ is not reduced or not prime. Then there exists a disk $D \subset \mathbb{R}^2$ such that $\partial D$ intersects $\tilde{P}$ exactly twice and $D$ contains crossings of $\tilde{P}$.  On $S$, then, $\partial D$ corresponds to a homotopically trivial curve bounding an immersed disk $D' \subset S$. We can isotope $D$ slightly on $\mathbb{R}^2$ so that $\partial D'$ intersects itself tranversely if at all. Since $\partial D$ intersects $\tilde{P}$ exactly twice, $\partial D'$ intersects $P$ exactly twice as well. The boundary of $D'$ cuts the immersed disk $D'$ into subdisks, the boundary of only one of which intersects the projection and does so twice. The fact $D$ contained crossings implies this subdisk of $D'$ contains crossings, a contradiction to $P$ being reduced and obviously prime.
\end{proof}

\noindent{\bf Construction of the link $Q'$:}

\medskip



Given an essential surface $F$ that might appear in $M \setminus Q$, and that lifts to a collection of compact surfaces in $\mathbb{R}^2 \cross I$, we construct a link $Q'$ in $S^3$ that will allow us to prove the nonexistence of the essential surface.
The universal cover of $S \times I$ is the thickened Euclidean or hyperbolic plane. Since each lift of $F$ is compact, there are an infinite number of lifts of $F$. Let $\tilde{F}$ be one such surface in the thickened plane. Consider $\tilde{Q}$, the lift of $Q$, as a link in $\mathbb{R}^2 \times I$. \\

Let $R$ be a particular fundamental domain in $\mathbb{R}^2$ that projects to $S$. Then we can choose $R$ to be a polygon with four edges when $S$ is a torus, and more when $S$ has a higher genus. We know $\partial R$ must intersect $\tilde{P}$ in at least eight points as follows. No side of $R$ can have zero intersections with $\tilde{P}$, as if it did, not all the complementary regions of $P$ on $S$ would be disks. In addition, each side of $R$ must intersect $\tilde{P}$ in an even number of points, or $P$ could not be alternating.  Since $R$ has at least four sides, $\partial R$ has at least eight points of intersection with $\tilde{P}$. \\

Now consider the projection of $\tilde{F}$ onto $\mathbb{R}^2$. Since $\tilde{F}$ is compact, we can take the union of a finite number of copies of the fundamental domain for the covering map of $\mathbb{R}^2$ onto $S$ that is a topological disk $D \subset \mathbb{R}^2$ and that contains all of the projection of $\tilde{F}$. We choose a particular $R$ to be a fundamental domain in the interior of $D$ that intersects the projection of $\tilde{F}$.\\


For each augmenting component in $M$, choose a connected component in its lift to $\mathbb{R}^2 \times I$  with projection that intersects $R$. Call these components $W$ and let the projection of these components to $\mathbb{R}^2 \times \{1/2\}$ be called $W'$.  Then because $\tilde{P}$ is connected, we can choose a larger disk $D'$ containing $D$ and made up of copies of the fundamental domain such that $W'$ intersects $\tilde{P} \cap D'$ in a single connected component denoted $E$. By construction, there will be such a component which, in particular, contains all of $R \cap \tilde{P}$. \\


 We call $E  \cap \partial D'$ the endpoints of $E$. Since $\partial R$ intersects $\tilde{P}$ at least eight times, there will be at least eight endpoints of $E$ on $D'$, and the total number must be even. So we can connect them in pairs to obtain a new link projection $\overline{E}$ in the plane.\\ 
 
 In fact, we can always close off $E$ such that $\overline{E}$ is alternating. We do this by first numbering the endpoints $1, \ldots, 2n$ clockwise. Then we add a simple arc outside $D'$ from endpoint 1 to endpoint $n+1$ clockwise, then a simple arc from endpoint 2 to endpoint $n+2$ counterclockwise, etc. until all endpoints are connected. Call each of these arcs an \emph{embroidery arc}, and the collection of the $n$ embroidery arcs the \emph{embroidery of $E$}. See Figure \ref{closure}. This procedure creates $n-1$ crossings on each embroidery arc, and since endpoints must alternate in terms of whether the strand leaving the endpoint is an understrand or an overstrand at the first crossing it encounters in $D'$, by considering the cases of $n$ even or $n$ odd, we see that we can always assign the new crossings so that $\overline{E}$ is alternating, without changing any existing crossing of $E$.\\

\begin{figure}[h]
    \centering
    \includegraphics[scale=.4]{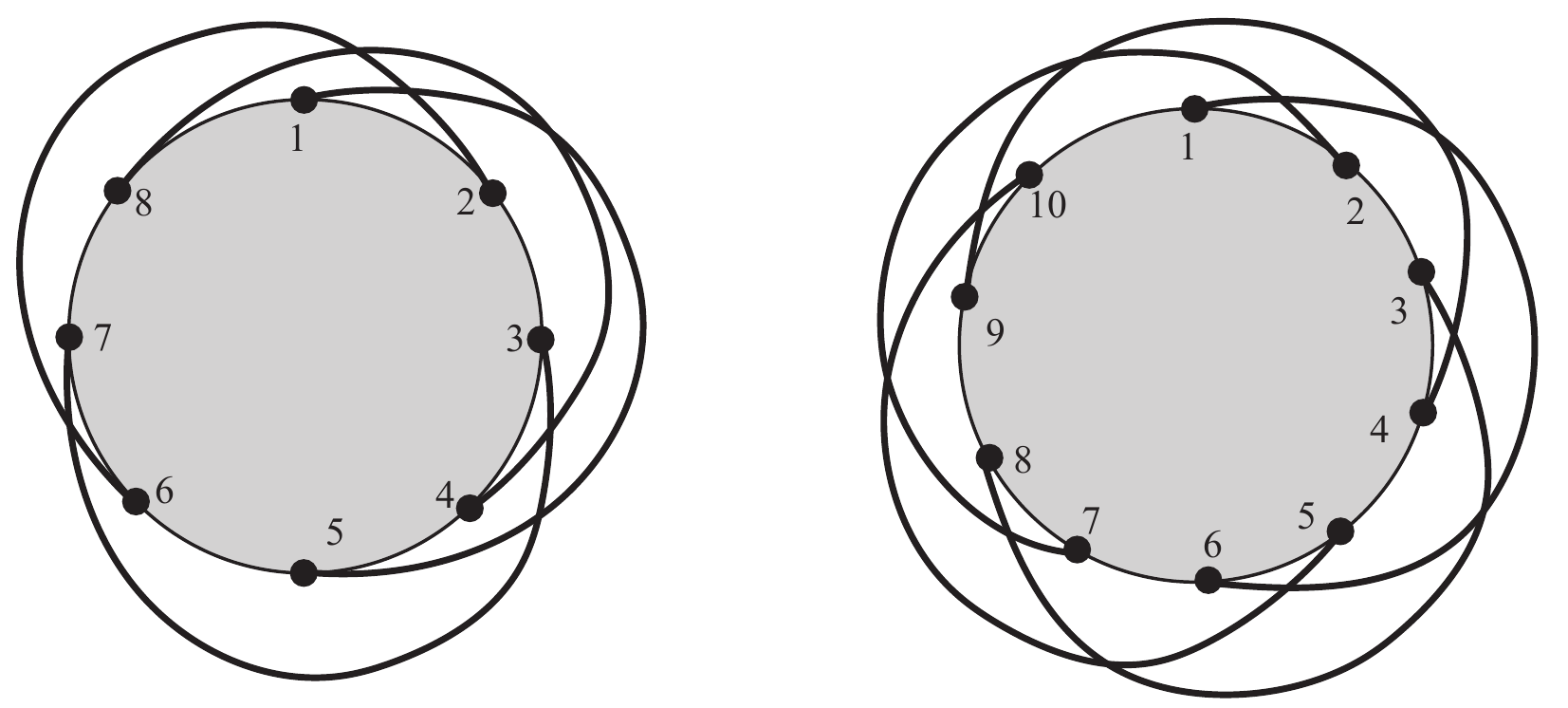}
    \caption{Closing off $E$ with embroidery.}
    \label{closure}
\end{figure}

\begin{claim} The projection $\overline{E}$ is alternating, connected, reduced and obviously prime. It corresponds to neither a 2-braid nor a trivial knot. 
\end{claim}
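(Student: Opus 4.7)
The plan is to verify each listed property of $\overline{E}$ in turn, leveraging Lemma \ref{lift lemma} to lift properties of $P$ to $\tilde{P}$ and then to $E$, and using the explicit combinatorics of the embroidery to extend from $E$ to $\overline{E}$. Lemma \ref{lift lemma} supplies that $\tilde{P}$ is connected, alternating, reduced, and obviously prime, and since $E$ is chosen as a connected component of $\tilde{P} \cap D'$, these properties hold locally for $E$ inside $D'$. The alternating property of $\overline{E}$ is already verified in the construction preceding the claim, where the parity of endpoint labels forces a consistent over/under assignment along each embroidery arc. For connectedness, each embroidery arc has both endpoints on the connected set $E$, so attaching the embroidery to $E$ cannot produce more than one component.

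The core step is verifying that $\overline{E}$ is reduced and obviously prime. I would argue by contradiction: suppose there is a disk $\Delta \subset \mathbb{R}^2$ whose boundary meets $\overline{E}$ transversely in exactly two points with $\Delta$ containing a crossing of $\overline{E}$. I would split into cases according to how $\Delta$ sits relative to $D'$. If $\Delta \subset D'$, then $\Delta$ witnesses failure of reducedness or obvious primality for $\tilde{P}$, contradicting Lemma \ref{lift lemma}. If $\Delta$ lies entirely outside $D'$, then $\Delta$ sits in the embroidery region; here I would use the fact that each pair of embroidery arcs crosses exactly once in a rotationally symmetric pattern, so any disk there that meets $\overline{E}$ in only two points either encloses no crossings at all or allows the putative crossing to be isotoped across $\partial \Delta$, contradicting the hypothesis. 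The hardest subcase is when $\partial \Delta$ meets $\partial D'$ transversely: I would take an innermost-subarc reduction of $\partial \Delta \cap \partial D'$ and combine it with the previous two cases, using the fact that $E$ is connected and contains all of $R \cap \tilde{P}$ to rule out small disks that straddle $\partial D'$.

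For the remaining assertions: $\overline{E}$ is not trivial since it is a reduced alternating diagram with at least one crossing (indeed, with the crossings inherited from the fundamental domain $R$, augmented by the $n-1$ crossings on each embroidery arc), and reduced alternating diagrams have minimal crossing number by the Kauffman--Murasugi--Thistlethwaite theorem, hence represent a nontrivial link. For the 2-braid condition, the at-least-eight points of $\partial R \cap \tilde{P}$ inside $D'$ force at least four distinct strands of $\overline{E}$ to cross $\partial D'$, while a 2-braid projection admits only two vertical strands through any separating circle with appropriate crossing structure; this structural mismatch rules out $\overline{E}$ being a 2-braid.

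The main obstacle I anticipate is the case in which $\partial \Delta$ is transverse to $\partial D'$ in the reduced/obviously prime verification, since the argument must simultaneously track how strands of $E$ continue into embroidery arcs across $\partial D'$ and carry out an innermost-disk reduction in a way compatible with the two-point transverse intersection hypothesis. The other delicate point is ensuring the 2-braid exclusion is more than a strand count: in a putative 2-braid, every region between consecutive crossings is bounded by exactly two strands, whereas the embroidery pattern produces regions bounded by many strands simultaneously, and I would make this rigorous by identifying an explicit region of $\overline{E}$ with too many boundary strands to arise from any 2-braid.
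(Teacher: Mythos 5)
Your treatment of connectedness, alternating, and nontriviality is sound, and for reduced/obviously-prime you adopt a case split by the position of $\Delta$ relative to $D'$ rather than the paper's split by \emph{which} arcs of $\overline{E}$ the two intersections of $\partial\Delta$ land on ($E$-arcs vs.\ embroidery arcs). The paper's decomposition is cleaner precisely because it never has to confront the transverse case you flag as ``the main obstacle'': if both intersections are with $E$ you push into $D'$ and contradict Lemma~\ref{lift lemma}; if one is with $E$ and one with embroidery you isotope $\partial\Delta$ into $D'$; if both are with embroidery you argue from the flower-shaped complementary regions. Your proposed innermost-subarc reduction of $\partial\Delta\cap\partial D'$ is plausible but not worked out, and it is genuinely fiddlier than it looks because after each reduction you must re-verify the two-point transverse hypothesis on $\overline{E}$. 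Switching to the paper's case split would dissolve this difficulty.

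The 2-braid exclusion has a real error. You argue that at least four strands must cross $\partial D'$ while ``a 2-braid projection admits only two vertical strands through any separating circle''; this is false. In the standard reduced alternating diagram of a $(2,k)$-torus link, a circle in the plane can meet the diagram transversely in arbitrarily many points (for instance a circle that weaves through the chain of bigons), so no strand count of $\partial D'\cap \overline{E}$ alone can rule out a 2-braid. Your fallback --- locating a non-bigon complementary region coming from the embroidery --- also does not close the gap as stated, since the standard 2-braid diagram itself has two non-bigon regions (the ``inside'' and ``outside'' of the chain), so the existence of one such region is not a contradiction. What is actually needed is the argument the paper gives: invoke the Flyping Theorem to reduce to the standard $(2,k)$ diagram, then analyze how $\partial D'$ can sit in that diagram. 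Since $\partial D'$ cannot cross the same side of a bigon twice in succession (else either two adjacent endpoints would be joined by an embroidery arc or $E$ would be disconnected), it must pass straight through one bigon and then immediately through an adjacent one, producing four consecutive endpoints whose only possible pairings (1--4, 2--3) or (1--3, 2--4) are both incompatible with the $i\leftrightarrow i+n$ embroidery rule. That combinatorial incompatibility --- not a strand count or a crude region count --- is the mechanism the argument hinges on, and it is missing from your proposal.
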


\begin{proof} With our construction, we have ensured that $E$ is connected, and so $\overline{E}$ is also connected. Next, we show that $\overline{E}$ is reduced. By Lemma \ref{lift lemma}, there cannot be reducible crossings as depicted in Figure \ref{reduced} entirely contained in $D'$. Adding the embroidery arcs cannot create Type I crossings, since no adjacent endpoints are connected. If adding the embroidery arcs created a new crossing on one of these arcs that can be removed by a flype, $E$ would not be connected. If adding the embroidery arcs turned an existing crossing in $E$ into one that can be removed by a flype, there must exist a disk $D''$ containing a portion $T$ of $\overline{E}$, whose boundary intersects $\partial D'$ exactly twice and intersects $\overline{E}$ exactly twice near the crossing. See Figure \ref{no flype}. Since $T$ contains an embroidery arc, such a disk $D''$ cannot exist given how we choose to close off $E$.\\

\begin{figure}[h]
    \centering
    \includegraphics[scale=.5]{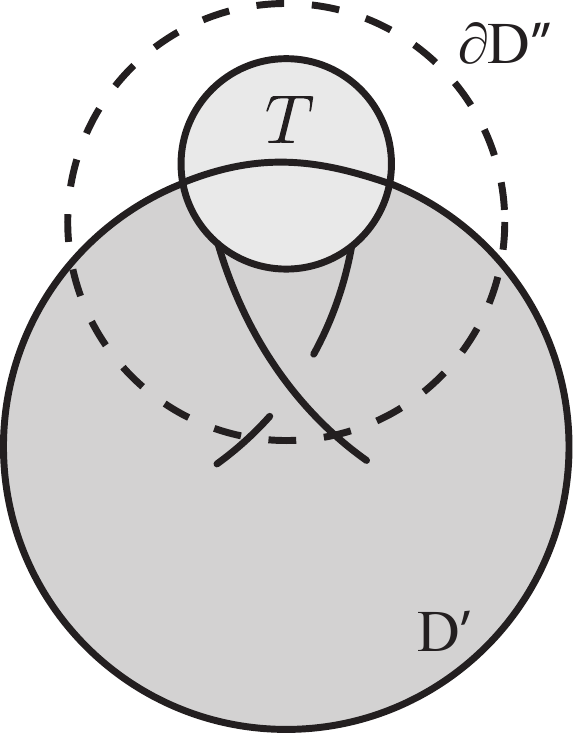}
    \caption{Suppose the addition of embroidery arcs created a flype crossing in $D'$.}
    \label{no flype}
\end{figure}

We now show that $\overline{E}$ is obviously prime. Suppose there exists a disk $D'' \subset \mathbb{R}^2$ containing a crossing of $\overline{E}$ such that $\partial D''$ crosses the projection $\overline{E}$ transversely twice. By Lemma \ref{lift lemma}, both intersections on $\partial D''$ cannot be with $E$. If one intersection on $\partial D''$ is with an embroidery arc and another is with $E$, given our construction of closing off $E$, we can always isotope $\partial D''$ to be contained in $D'$, again contradicting the obvious primeness of the original projection $P$.\\ 

If both intersections on $\partial D''$ are with embroidery arcs, consider the complementary regions of the union of the embroidery arcs of $\overline{E}$ on the plane. See Figure \ref{flower}. Given our scheme for embroidering $E$ and the requirement that there be at least eight endpoints, and that $\partial D''$ only intersects $\overline{E}$ twice, $\partial D''$ can only  cross a sub-arc of an embroidery arc that is on the boundary or interior of the central region.  So consider only arcs of the embroidery arcs that are incident to the central region. They must have the pattern of a flower, perhaps with two petals sliced in half. 
If no portion of $E$ is contained in $D''$, then the crossing in $D''$ must result from embroidery arcs. Then there must be endpoints of embroidery arcs inside $D''$, contradicting the assumption that $D'' \cap E = \varnothing$. Otherwise, if there exists a portion of $E$ contained in $D''$, $P'$ would not be connected. This is a contradiction to our construction, where we remove any non-connected piece in $D'$ before we embroider. Therefore, by Theorem 1 of \cite{Menasco}, $\overline{E}$ is prime.\\

\begin{figure}[h]
    \centering
    \includegraphics[scale=.5]{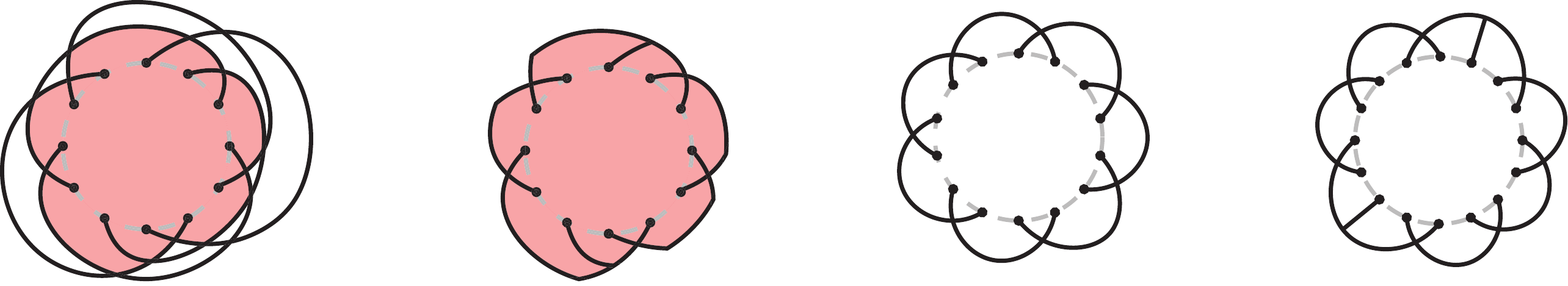}
    \caption{Complementary regions of embroidery arcs. The central region is shaded and always has the shape of a flower, perhaps with two bisected petals.}
    \label{flower}
\end{figure}

Note also that $\overline{E}$ is nontrivial. By Theorem 5.8 of \cite{Crowell}, there is no reduced alternating projection of the trivial knot.\\

Finally, we demonstrate that $\overline{E}$ is not a 2-braid. Suppose that it were. By the Flyping Theorem of \cite{MT1991} and \cite{MT1993}, the only reduced alternating projection of a 2-braid is the standard one. Then as in our consgruction, the disk  $D'$ appears in  the standard projection of a 2-braid such that all arcs outside $D'$ respect the rules of our embroidery construction. If $\partial D'$ does not intersect the bigon regions of a 2-braid, then $D'$ would either have no crossings or no endpoints, which is impossible. If $\partial D'$ intersects a bigon region, it cannot intersect the same side of the bigon region twice consecutively, for otherwise two consecutive endpoints would be connected by an embroidery arc, or there would exist a non-connected portion of $E$ in $D'$. So, $\partial D'$ must cross the two sides of a bigon region in immediate succession. Moreover, $\partial D'$ must then immediately cross the two sides of another bigon region. See Figure \ref{star}. Of these four consecutive endpoints on $\partial D$, the embroidery arcs either connect the first to the fourth and the second to the third, or the first to the third and the second to the fourth, neither of which occurs for  a valid embroidery pattern.\\


\begin{figure}[h]
    \centering
    \includegraphics[scale=.3]{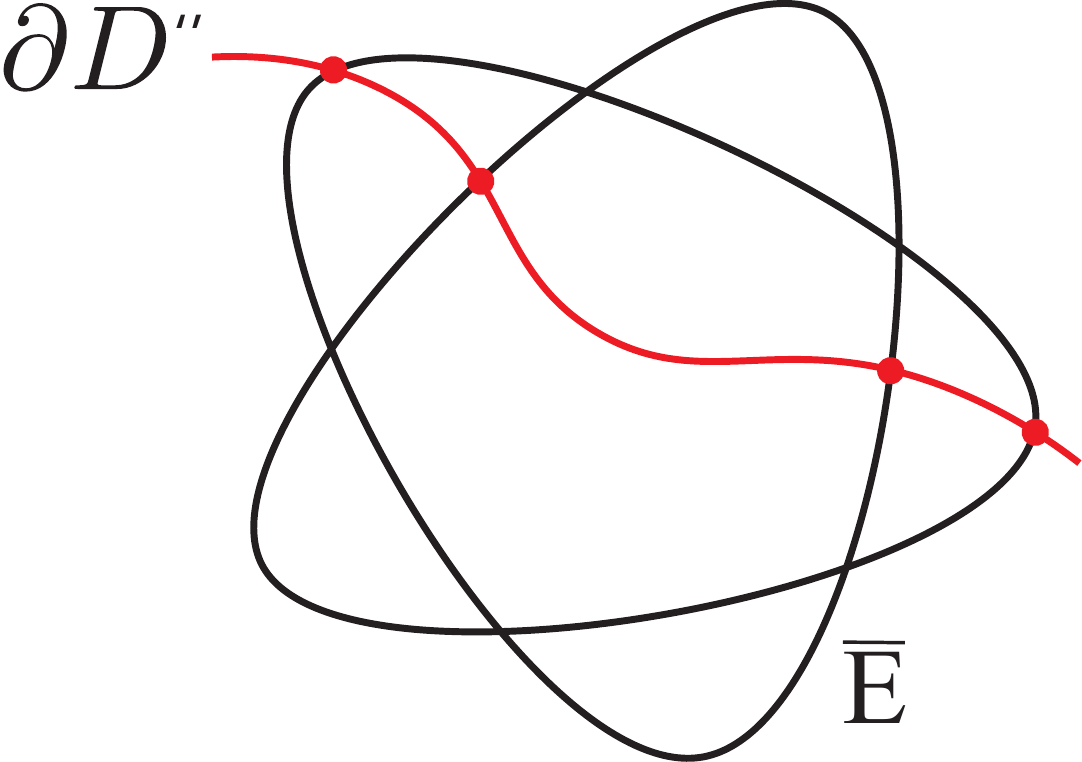}
    \caption{Suppose $\overline{L'}$ is a 2-braid.}
    \label{star}
\end{figure}

Therefore, $\overline{E}$ is a reduced connected prime alternating link projection that does not correspond to a trivial knot or a 2-braid.
\end{proof}

Adding to $\overline{E}$ the augmenting components $W$ with projections in $W'$, we obtain a generalized augmented alternating link $Q'$ in  a compact subset of $\mathbb{R}^2 \times I$. So we can take $\mathbb{R}^2 \subset S^2$, and cap off $S^2 \cross I$ with two balls to embed $Q'$ in $S^3$.   By \cite{Adams2}, it must have hyperbolic complement.

\begin{lemma}\label{spheres}
There are no essential spheres in $M \setminus Q$.
\end{lemma}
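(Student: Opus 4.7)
The proof is by contradiction using the auxiliary link $Q'$ constructed in the preceding paragraphs. Suppose $F \subset M \setminus Q$ is an essential sphere. Since the genus of $S$ is assumed to be at least one (the $S^2$ case is settled by \cite{Adams2}), $M$ is either $S \times I$ or $S \times (0,1)$, both aspherical, so $\pi_2(M)=0$ and $F$ bounds a $3$-ball $B \subset M$. Essentiality in $M \setminus Q$ forces $B \cap Q \neq \varnothing$. Passing to the universal cover $\widetilde{M}$, which is $\mathbb{R}^2 \times I$ or $\mathbb{R}^2 \times (0,1)$, the ball $B$ lifts homeomorphically (being simply connected) to a ball $\tilde{B}$ bounded by a lift $\tilde{F}$ of $F$ and containing a nonempty sub-collection of arcs and loops of the lifted link $\tilde{Q}$.

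Next I would run the $Q'$ construction, picking the fundamental domain $R$ so that its projection meets that of $\tilde{F}$, and picking the enclosing disk $D'$ large enough that the projection of $\tilde{B}$ to $\mathbb{R}^2$ is strictly contained in $\mathrm{int}(D')$ and every lifted augmenting component of $\tilde{Q}$ meeting $\tilde{B}$ appears among the chosen components $W$. By the claim just established, $Q' \subset S^3$ is a generalized augmented alternating link satisfying the hypotheses of Adams's theorem \cite{Adams2}, so $S^3 \setminus Q'$ is hyperbolic. Viewing $\tilde{F}$ as a $2$-sphere in $S^3$, it bounds two $3$-balls, $\tilde{B}$ and $\tilde{B}^c$. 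The inside contains $\tilde{Q} \cap \tilde{B} \subset Q'$, which is nonempty by essentiality of $F$; the outside contains the embroidery arcs together with strands of $\overline{E}$ projecting outside the projection of $\tilde{B}$, which is nonempty because $D'$ strictly contains this projection. Hence $\tilde{F}$ is an essential sphere in the hyperbolic manifold $S^3 \setminus Q'$, a contradiction.

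The main obstacle is the bookkeeping in the $Q'$ construction: we must verify that $D'$ can be enlarged to accommodate $\tilde{B}$ and all lifted augmenting loops meeting $\tilde{B}$ without destroying the alternating, reduced, obviously prime, nontrivial, and non-2-braid properties of $\overline{E}$. Since enlarging $D'$ only incorporates more of $\tilde{P}$ and pushes the embroidery arcs farther from the central region, the arguments of the preceding claim extend verbatim, and the finiteness of the relevant augmenting lifts meeting the compact set $\tilde{B}$ means a sufficiently large finite $D'$ suffices.
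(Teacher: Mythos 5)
The proposal is correct and takes essentially the same approach as the paper: both lift the essential sphere $F$ to the universal cover, run the $Q'$ construction, and derive a contradiction with the hyperbolicity of $S^3 \setminus Q'$ established by \cite{Adams2}. The only cosmetic difference is that the paper splits into two cases according to which side of $\tilde F$ in $S^3$ is disjoint from $Q'$, whereas you argue directly that both sides meet $Q'$ (the inside via essentiality, the outside via the embroidery arcs), which is logically equivalent; you also make explicit the requirement that the chosen lifts $W$ of the augmenting components include those lying inside $\tilde B$, a point the paper leaves implicit.
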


\begin{proof}
Suppose there exists an essential sphere $F$ in $M \setminus Q$. Note that since $L$ is a cellular alternating link in $S \times I$, we know by the hyperbolicity of $M \setminus L$ (\cite{SMALL2017}that no components of $L$ can be contained in the ball $B$ bounded by $F$. So it can only be augmenting components in $B$ that make $F$ essential. \\

From our construction, we know that there is a copy $\tilde{F}$ of $F$ in $S^3 \setminus Q'$ that bounds a ball $\tilde{B}$ in $S^2 \cross I$. Because $S^3 \setminus Q'$ is hyperbolic, $\tilde{F}$ must bound two balls in $S^3$ such that all components of $Q'$ must be in one of them. If $\tilde{B}$ is the ball containing no components of $Q'$, then it projects to $B$ in $M \setminus Q$, a contradiction to the fact $B$ contained components of $Q$. If on the other hand, $\tilde{B}$ is the ball containing all of $Q'$, then $B$ must contain all of $Q$. In particular, it contains all of $L$, which is a contradiction to the fact that $L$ is a prime reduced cellular alternating link on the surface and therefore hyperbolic.  
\end{proof}


\begin{lemma}\label{primeQ} The link $Q$ is prime in $M$.
\end{lemma}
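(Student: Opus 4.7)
The plan is to show that every embedded 2-sphere $F \subset M$ meeting $Q$ transversely in two points bounds a 3-ball in $M$ intersecting $Q$ in a single unknotted arc, via the lift-and-reduce strategy of Lemma \ref{spheres}. Since $S$ has genus at least one, $\pi_2(M) = \pi_2(S) = 0$ and $M$ is irreducible, so $F$ already bounds a 3-ball $B \subset M$; the task reduces to showing that $B \cap Q$ is a trivial arc.

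Let $p\colon \widetilde M \to M$ be the universal cover and lift $F$ to a 2-sphere $\tilde F \subset \widetilde M$. Because $B$ is simply connected, $p^{-1}(B)$ is a disjoint union of balls each mapped homeomorphically onto $B$, so the lift $\tilde B$ bounded by $\tilde F$ is a compact ball and $p|_{\tilde B}\colon \tilde B \to B$ is a homeomorphism. Now invoke the construction preceding Lemma \ref{spheres}, choosing $D'$ large enough to contain the planar projection of $\tilde B$ and so that $E$ encompasses all of $\tilde P$ meeting that projection; this produces the generalized augmented alternating link $Q' = \overline E \cup W \subset S^3$. By \cite{Adams2}, $Q'$ is hyperbolic, and hence prime, in $S^3$. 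Since embroidery arcs lie outside $D' \times I \supseteq \tilde B$, the sphere $\tilde F$ meets $Q'$ in exactly the two points $\tilde F \cap \tilde Q$, so primeness of $Q'$ forces one of the two $S^3$-balls bounded by $\tilde F$ to intersect $Q'$ in a single unknotted arc.

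If that ball is $\tilde B$, then $\tilde B \cap Q' = \tilde B \cap \tilde Q$ is an unknotted arc, and $p|_{\tilde B}$ transports this to show $B \cap Q$ is an unknotted arc, establishing that $Q$ is prime. The opposite case, in which the unknotted arc lies in the $S^3$-complement $\tilde B^c$, is excluded by reasoning analogous to Lemma \ref{spheres}: the at least four embroidery arcs outside $D'$, together with the spherical caps of $S^3$ and any $\overline E$- or $W$-components forced to lie entirely outside $\tilde B$, cannot collectively form one embedded unknotted arc with endpoints on $\tilde F$. The main obstacle is making this ``outside-case'' exclusion airtight — specifically, arranging the enlargement of $D'$ so as to guarantee that $\tilde B^c \cap Q'$ really does contain extra loops or multiple disjoint strands — but once this is handled, primeness of $Q'$ in $S^3$ descends cleanly to primeness of $Q$ in $M$.
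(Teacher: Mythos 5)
Your proof follows the same route as the paper's: lift the twice-punctured sphere to the universal cover, invoke the $D'$/embroidery construction to produce a generalized augmented alternating link $Q'$ in $S^3$, and use hyperbolicity (hence primeness) of $Q'$ from \cite{Adams2}. The paper phrases it as a contradiction (assume $B\cap Q$ is a nontrivial arc, lift, observe $\tilde B\cap Q'$ is a nontrivial arc, contradict primeness of $Q'$), while you argue directly for every twice-punctured sphere, but the content is identical.

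The one step you flag as an ``obstacle''---ruling out that the trivial-arc ball is $\tilde B^c$ rather than $\tilde B$---is in fact already closed by a fact you state yourself: you arrange $D'$ so that the projection of $\tilde B$ lies inside $D'$, while every embroidery arc of $\overline{E}$ lies outside $D'$. Hence $\tilde B^c \cap Q'$ contains all the embroidery arcs (and the two capping balls). A ball whose intersection with $Q'$ is a single trivial arc would have to contain all of $Q'$ except that arc; since $\tilde B$ cannot contain the embroidery, $\tilde B^c$ cannot be that ball. So primeness of $Q'$ forces $\tilde B\cap Q'=\tilde B\cap\tilde Q$ to be the trivial arc, and $p|_{\tilde B}$ transports this to $B\cap Q$. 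No further enlargement scheme is needed; the argument is already airtight. (The paper's own proof is even terser and relies on the same implicit observation about where the embroidery sits relative to $\tilde B$.)
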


\begin{proof} Suppose not. Then there is a sphere $F$ in $M$ punctured twice by $Q$ that bounds a ball $B$ in $M$ such that $Q \cap B$ is not an unknotted arc. Let $\tilde{B}$ be a lift of $B$ to $\mathbb{R}^2 \cross I$ and let $\tilde{F} = \partial \tilde{B}$. Then as in the construction, we can associate the projection $\overline{E}$ to $\tilde{F}$. Adding in the augmenting components, we obtain a link $Q'$ in $S^3$ which is a generalized augmented alternating link and therefore hyperbolic. But it contains $\tilde{B}$, which is a ball intersecting $Q'$ in a portion that is not a trivial arc. This contradicts hyperbolicity of a generalized augmented alternating link in $S^3.$
\end{proof}

We next consider essential tori. The following lemma will prove useful.

\begin{lemma}\label{tori lemma}
    Suppose $S$ is a surface other than the torus, and suppose $T \subset S \times I$ is a torus.
    Then, $T$ is compressible in $S \times I$.
\end{lemma}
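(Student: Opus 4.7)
The plan is to proceed by contradiction, assuming $T$ is incompressible, and derive a contradiction from the fundamental group of $S \times I$. Because we are working inside Section \ref{orientable section}, $S$ is a closed orientable surface, and since $T$ is an orientable surface embedded in the orientable 3-manifold $S \times I$, $T$ is automatically two-sided.

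First I would invoke the loop theorem: for a two-sided surface that is not a sphere, incompressibility is equivalent to $\pi_1$-injectivity of the inclusion. Therefore, incompressibility of $T$ would yield an injection $\mathbb{Z}^2 = \pi_1(T) \hookrightarrow \pi_1(S \times I) \cong \pi_1(S)$, where the last isomorphism comes from the deformation retraction of $S \times I$ onto $S \times \{1/2\}$.

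Next I would case on the topological type of $S$. Since $S$ is closed, orientable, and not a torus, it is either $S^2$ or a surface of genus $g \geq 2$. In the sphere case $\pi_1(S) = 1$, which cannot contain $\mathbb{Z}^2$. In the higher-genus case $S$ admits a hyperbolic metric with universal cover $\mathbb{H}^2$, so $\pi_1(S)$ is a torsion-free Gromov hyperbolic group; every abelian subgroup of such a group is infinite cyclic, hence again there is no $\mathbb{Z}^2$ subgroup. Either way we obtain a contradiction, so $T$ must be compressible.

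I expect no real obstacle here; this is a routine application of the loop theorem together with the standard fact that surface groups of closed hyperbolic surfaces contain no $\mathbb{Z} \oplus \mathbb{Z}$. The only points needing explicit mention are (i) the two-sidedness of $T$, which lets us deduce $\pi_1$-injectivity from incompressibility, and (ii) the enumeration of the allowed cases for $S$, which reduces the problem to the trivial case $S = S^2$ and the Gromov hyperbolic case $g \geq 2$.
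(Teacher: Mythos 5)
Your argument is correct and is essentially the same as the paper's: both reduce to the fact that $\pi_1(S)$ contains no $\mathbb{Z}\oplus\mathbb{Z}$ subgroup when $S$ is a closed orientable surface other than the torus, and then use Dehn's Lemma / the Loop Theorem to produce an embedded compressing disk for $T$. You phrase it as a contradiction (assume incompressible, hence $\pi_1$-injective) while the paper argues directly (the inclusion has nontrivial kernel, so Dehn's Lemma gives a compressing disk); you also spell out the two-sidedness hypothesis and justify the $\mathbb{Z}^2$ non-containment via Gromov hyperbolicity, which the paper leaves implicit, but these are elaborations rather than a different route.
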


\begin{proof}
 The fundamental group of the torus is $\pi_1(T)=\mathbb{Z}\oplus \mathbb{Z}$, and the fundamental group of a thickened surface $S\cross I$ is $\pi_1(S\cross I)=\pi_1(S)$. Apart from the torus, no other surface's fundamental group contains $\mathbb{Z}\oplus\mathbb{Z}$ as a subgroup. 
 Therefore the induced fundamental group homomorphism must have a nontrivial kernel;
 by Dehn's Lemma (Corollary 3.2 in \cite{Hatcher}), the contraction of a nontrivial element of this kernel gives a disk in $S\cross I$ with boundary a nontrivial curve on $T$.
 Therefore the torus $T$ has an embedded compression disk in $S\cross I$.
\end{proof}

\begin{lemma}\label{tori}
There are no essential tori in $M \setminus \mathring{N}(Q)$.
\end{lemma}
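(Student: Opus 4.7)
Suppose for contradiction that $T$ is an essential torus in $M \setminus \mathring{N}(Q)$. The plan is to show $T$ is compressible in $M$, compress it (in $M$, not in $M \setminus Q$) to a 2-sphere $F'$ meeting $Q$ in $2k$ points with $k$ minimized, lift $F'$ to a compact 2-sphere in the universal cover $\tilde M$, apply the universal-cover embroidery construction of this section to produce a generalized augmented alternating link $Q' \subset S^3$ meeting this sphere in $2k$ points, and then contradict primeness of $Q'$, which follows from the hyperbolicity of $Q'$ established in \cite{Adams2}.

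For the first step, when $S$ has genus at least two the compressibility of $T$ in $M$ is immediate from Lemma \ref{tori lemma}. When $S$ is a torus so $M = S \times (0,1)$, I would invoke the standard classification of incompressible surfaces in $S \times I$: any incompressible torus is isotopic to a horizontal cross-section $S \times \{t\}$; but each augmenting component $J_i$ is vertical and punctures every horizontal cross-section transversely in two points, so no horizontal torus can lie in $M \setminus Q$. Hence $T$ is compressible in $M$ in either case.

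Choose an embedded compressing disk $D \subset M$ for $T$ minimizing $k := |D \cap Q|$; since $T$ is incompressible in $M \setminus Q$ we have $k \geq 1$. Compressing $T$ along the non-separating curve $\partial D \subset T$ and capping with two parallel copies of $D$ produces an embedded 2-sphere $F' \subset M$ meeting $Q$ transversely in $2k$ points, bounding a 3-ball $B$ by irreducibility of $M$ (both $S \times I$ and $T \times (0,1)$ are irreducible). Since $F'$ is simply connected, its preimage in $\tilde M$ is a disjoint collection of 2-spheres; fix one compact lift $\tilde F'$, enclose its planar projection together with one lift of each relevant augmenting component in a disk $D' \subset \mathbb{R}^2$ built from fundamental domains so that the intercepted piece of $\tilde P$ is connected, embroider, and cap $\mathbb{R}^2$ to $S^2$ to obtain $Q' \subset S^3$. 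Then $Q'$ is a generalized augmented alternating link and is hyperbolic by \cite{Adams2}, and $\tilde F'$ embeds in $S^3$ meeting $Q'$ transversely in $2k$ points, bounding a ball $\tilde B$ whose tangle matches $(B, B \cap Q)$.

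The main obstacle is extracting the contradiction. When $k = 1$, primeness of $Q'$ (immediate from hyperbolicity) forces the tangle in $\tilde B$ to be a trivial arc, which pulls back to show $T$ is boundary-parallel to a regular neighborhood of the component of $Q$ meeting $D$, contradicting either essentialness of $T$ or the nonisotopy clause of Definition \ref{main definition}. For $k \geq 2$, I would induct on $k$: applying a primeness-style reduction to the pair $(Q', \tilde F')$ inside the hyperbolic complement $S^3 \setminus Q'$ yields a compressing disk for $T$ meeting $Q$ in strictly fewer points, contradicting minimality. The delicate point I expect to require the most care is verifying that simplifications visible in $S^3 \setminus Q'$ actually pull back to $M \setminus Q$ rather than being artifacts of the embroidery closure; I would handle this by enlarging $D'$ enough that $D' \times I$ faithfully represents the local structure of $(M, Q)$ near $F'$, so that no simplification can take advantage of the embroidery arcs lying strictly outside $D'$.
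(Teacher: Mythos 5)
Your proposal has the right flavor---lift to the universal cover, close off with embroidery to produce a hyperbolic link $Q'\subset S^3$, and trace a contradiction back---but there are two genuine gaps.

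First, your dismissal of the boundary-parallel case when $S$ is a torus is not correct. The classification of incompressible surfaces in $T^2\times I$ says that an incompressible torus is \emph{isotopic in $T^2\times I$} to a horizontal level torus $T^2\times\{t\}$. That isotopy is free to pass through $Q$, so the conclusion ``no horizontal torus lies in $M\setminus Q$'' does not rule out an incompressible-in-$M$ torus $T$ sitting inside $M\setminus Q$. Indeed such a $T$ can exist: it bounds a region on each side homeomorphic to a thickened torus. This is exactly Case (1) of the paper's proof, which requires its own argument (showing $L$ lies entirely to one side, that the augmenting components on the other side each bound disks missed by $L$, and then deriving an essential sphere). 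You cannot fold this into the compressible case.

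Second, the induction on $k$ is the crux of your argument and it is not substantiated. When $k=1$ your sphere meets $Q'$ in two points and primeness of $Q'$ (essentially the paper's Lemma~\ref{primeQ}) applies; but even here, getting from ``$\tilde B\cap Q'$ is a trivial arc'' to ``$T$ is boundary-parallel'' requires care, since $T$ is recovered from $F'$ by tubing the two copies of $D$ back together, and whether the tubing yields a boundary-parallel or compressible torus depends on how the arc sits relative to that tube. More seriously, for $k\ge 2$ the sphere $\tilde F'$ meets $Q'$ in $2k\ge 4$ points, and there is no contradiction to be had merely from its existence: essential $2k$-punctured spheres (e.g.\ Conway spheres) do occur in hyperbolic link complements, so hyperbolicity of $S^3\setminus Q'$ does not obstruct them. ``Primeness-style reduction'' is not a standard operation in this setting and you would need to build it. The paper avoids this problem entirely by working with the lift of the torus $T$ itself rather than a compressed sphere: when the core curve of the solid torus that $T$ bounds is homotopically trivial (or $T$ bounds a knot exterior), $\tilde T$ is compact and the disk embroidery applies; when the core curve is homotopically nontrivial, $\tilde T$ is a noncompact tube and the paper introduces a second, annulus-based embroidery construction with an extra augmenting component threaded through the hole of the annulus to prevent compression. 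Your compress-to-a-sphere idea is an attempt to sidestep that second construction, which would be a genuine simplification if it worked, but as written it does not close the $k\ge 2$ gap.
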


\begin{proof}
Suppose for the sake of contradiction that $T$ is an essential torus in $S\cross I\setminus Q$. By Lemma \ref{tori lemma}, we can split our discussion into two cases:
\begin{enumerate}
    \item $S$ is a torus and $T$ is isotopic to $S \cross \{0\}$ in $S\cross I$,
    \item $T$ is compressible in $S\cross I$.
\end{enumerate}

In Case (1), suppose $S$ is a torus, and $T$ is parallel to a component of the boundary of $S\cross I$.
Note that $T$ separates $S\cross I$ into two thickened tori. Then, $L$ must be contained entirely in one of the thickened tori, as $S\cross I \setminus L$ would otherwise have an essential torus $T$ and fail to be hyperbolic. If $L$ is in the thickened torus to one side of $T$, there must be augmenting components $J$ to the other side of $T$ in $S\cross I$ to prevent $T$ from still being boundary parallel in $S\cross I\setminus Q$. But since each augmenting component is trivial in $S \cross I$, it bounds a disk that misses $T$ and that therefore is not punctured by $L$. If we remove all but one of the augmenting components to this side of $T$, we see that the remaining component bounds a disk, the boundary of a neighborhood of which is a sphere separating this component from the rest of the link. This is a contradiction to the nonexistence of essential spheres, as shown in Lemma \ref{spheres} for the link consisting of $L$ and this one augmenting component.\\


In Case (2), suppose $T$ is compressible in $S\cross I$. Surgering along the compression disk yields a sphere, which must bound a ball to one side. So $T$ either bounds a solid torus or a knot exterior which itself is contained in a ball in $S \cross I$. Lift $S \cross I$ to its universal cover $\mathbb{R}^2 \times I$.\\

If the torus bounds a solid torus with core curve that is homotopically trivial  in  $S \times I$ or it bounds a knot exterior, then $T$ lifts to infinitely many tori, each copy of which is contained within a ball in $\mathbb{R}^2 \cross I$. See Figure \ref{lifting torus to balls}. (When the core curve is homotopically trivial but not trivial, the solid tori can be linked and the balls need not be disjoint.)   Let $\tilde{T}$ be a particular lift of $T$ and let $B$ be a ball in $\mathbb{R}^2 \cross I$ that contains it. Then $\tilde{T}$ separates a solid torus or a knot exterior from $\mathbb{R}^2 \cross I$.  As in our construction above, we obtain the complement of a new generalized augmented alternating link $Q'$ in the thickened plane. If we consider it in $S^3$, then since we know it is hyperbolic, $\tilde{T}$ must be boundary parallel or compress in the complement of $Q'$.\\

\begin{figure}[h]
    \centering
    \includegraphics[scale=.5]{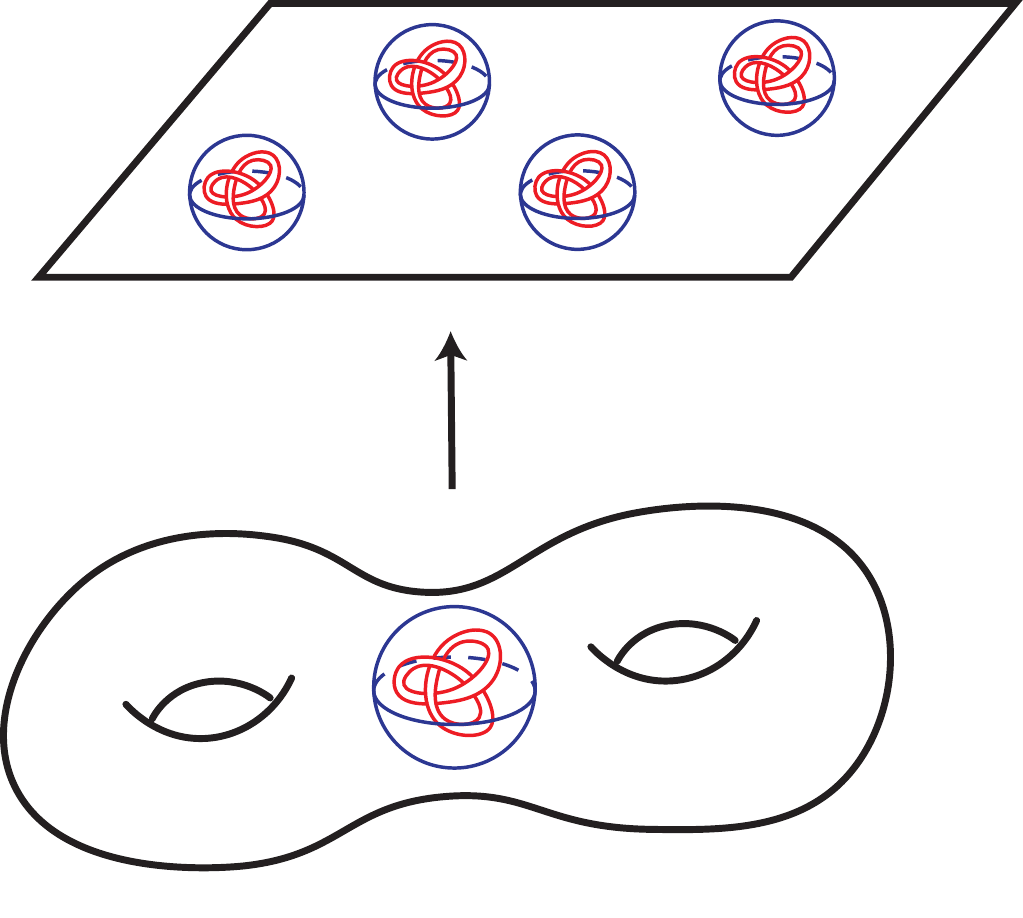}
    \caption{$T$ compressible in $S\cross I$ and contained in a ball.}
    \label{lifting torus to balls}
\end{figure}

First consider the case where $\tilde{T}$ is boundary parallel in $S^3 \setminus Q'$. So $\tilde{T}$ bounds a solid torus $\tilde{V}$ in $S^3$ with core curve a component $C$ of $Q'$, and $\tilde{T}$ separates that component from the rest of $Q'$. Note that a torus in $S \cross I$ cannot separate the two boundary components. Therefore, $V$ either contains the two balls we added to obtain $S^3$, or it contains neither. If it contains neither, then projecting $\mathbb{R}^2 \cross I$ back down to $S \cross I$ means that $T$ is boundary parallel in $S \cross I \setminus Q$. If $V$ does contain the two balls, then we can isotope a curve on $\tilde{T}$ to a meridian of $C$ to obtain an annulus. That annulus projects to an annulus in $M \setminus Q$ that contradicts the primeness we proved in Lemma \ref{primeQ}.\\


If on the other hand, $\tilde{T}$ is compressible in the complement of $Q'$, then if $T$ bounds a knot exterior in $S \cross I$, the lift $\tilde{T}$ does also in $S^3$. So the compression must be to the outside, and therefore the knot exterior that $\tilde{T}$ bounds in $S^3 \setminus Q'$ is contained in a ball that either contains all of $Q'$ or none of it.  In either case, the compression disk projects to a compression disk for $T$ in $S \cross I \setminus Q$, a contradiction.\\

If $\tilde{T}$ is compressible in the complement of $Q'$, and $T$ bounds a solid torus $V$ in $S \cross I$, then $\tilde{T}$ bounds a solid torus $\tilde{V}$ in $S^3$ that is contained in $\mathbb{R}^2 \cross I$ considered as a subset of $S^3$. If the compression is to the inside of $\tilde{V}$, then the compression disk projects to a compression disk of $V$ in $S \cross I \setminus Q$, a contradiction to the essentiality of $T$.\\

If the compression is to the outside of $\tilde{V}$, then again, the sphere resulting from the compression must contain all or none of $Q'$. If it contains none of $Q'$, then $\tilde{V}$ contains none of $Q'$ and projects to $V$ containing none of $Q$. But then $T$ compresses through $V$, contradicting essentiality. 
However, it cannot contain all of $Q'$ since $\tilde{V}$ was contained in $D \cross I \subset \mathbb{R}^2 \cross I$, and the construction of $\overline{E}$ involved adding embroidery outside $D \cross I$. \\

If the core curve $\gamma$ of the solid torus $V$ bounded by $T$ is not a homotopically trivial knot in $S \times I$,  then the core curve lifts to a collection of infinite knots in the universal cover, and the solid torus lifts to regular neighborhoods of those infinite knots.  Each tube spans an infinite number of fundamental domains, and thus, we cannot use the previous construction of $\overline{E}$. Hence, we describe a second construction here.\\

The curve $\gamma$ is homotopic to a mutiple of a geodesic $\gamma'$ on $S$.  If $\gamma'$ is nonseparating, let $\alpha$ be a geodesic on $S$ that intersects $\gamma'$ once. If $\gamma'$ is separating, as can occur when the genus is greater than one,  choose $\alpha$ to be a geodesic that intersects it twice.\\

We can choose a fundamental domain for the projection of $\mathbb{R}^2$ to $S$ such that two nonadjacent edges lie in lifts of $\alpha $. Let $\gamma''$ be a choice of a lift of $\gamma'$ to the universal cover of $S$, which is either $\mathbb{E}^2$ or $\mathbb{H}^2$, but which we now denote $\mathbb{R}^2$. Note that there is a discrete subgroup of the isometries of the universal cover, 
such that the quotient of $\gamma''$ under the action of the group is $\gamma'$.
Choose $R$ to be a copy of the fundamental domain for the projection of $\mathbb{R}^2$ to $S$ that intersects $\gamma''$.\\

Let $\tilde{V}$ be lift of $V$ to $\mathbb{R}^2 \cross I$ such that its projection to $\mathbb{R}^2$ intersects $R$.
For each augmenting component in $M$, choose a lift to $\mathbb{R}^2 \times I$  with projection to $\mathbb{R}^2$ that intersects $R$. Call these components $W$ and let the projection of these components to $\mathbb{R}^2 \times \{1/2\}$ be called $W'$. \\ 

Because $\tilde{P}$ is connected, we can choose a disk $D$ in $\mathbb{R}^2$ such that the following hold:

\begin{enumerate}
\item $D$ is a  made up of a union of fundamental domains isometric to $R$, including $R$ itself.

\item Two disjoint edges on the boundary of $D$ lie in two lifts $\alpha_1$ and $\alpha_2$  of $\alpha$ that are identified by a covering translation in the subgroup that acts on $\gamma''$.

\item A connected subset of $\tilde{V}$ that projects to all of $V$ in $S \cross I$ has projection to $\mathbb{R}^2$ that is contained in $D$. 
\item $W'$ intersects $\tilde{P} \cap D$ in a single connected component, which we denote  $E$. 

\item The number of endpoints of $E$ to either of the two sides of $D$ that are not in lifts of $\alpha$ are both even and at least eight.
\end{enumerate}

Condition (1) is immediate.
Condition (2) can be satisfied because $\tilde{V}$ stays within a finite fixed distance of $\gamma''$.
Conditions (3),(4) and (5) can be satisfied by taking $D$ large enough.\\

We add  any necessary connected components of $\tilde{P} \cap D$ to $E$ to obtain $E'$ so the number of endpoints on the two sides of $D$ on $\alpha_1$ and $\alpha_2$ are the same. See Figure \ref{coverdomain}(a) and (b) for the Euclidean and hyperbolic cases, where the red arcs correspond to $W'$. \\

\begin{figure}[ht]
    \centering
    \includegraphics[scale=.5]{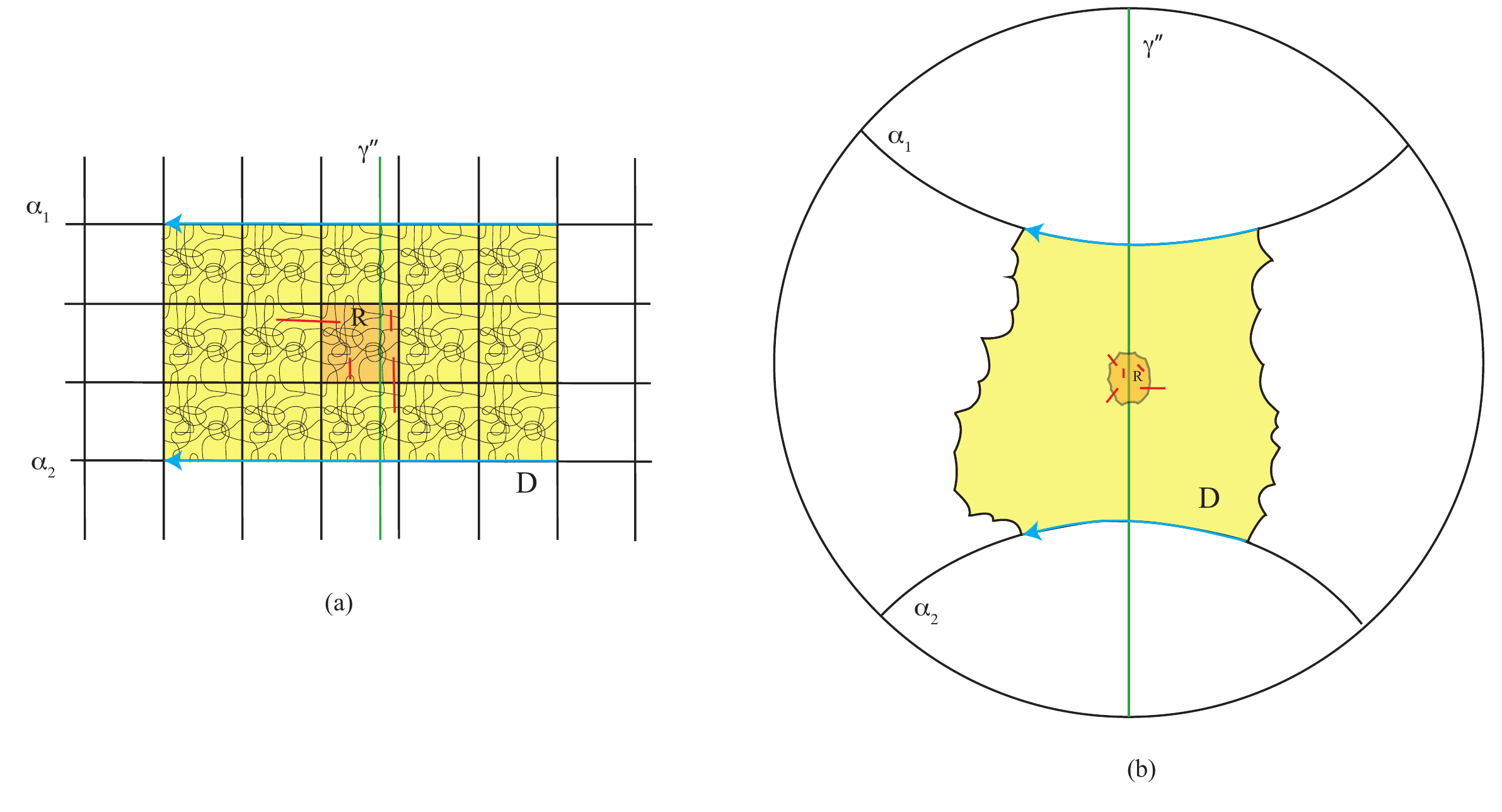}
    \caption{Construction of disk $D$ in $\mathbb{E}^2$ and $\mathbb{H}^2$.}
    \label{coverdomain}
\end{figure}

Note that because there is a covering translation of $\mathbb{R}^2$ that identifies the two boundary edges of $D$ on $\alpha_1$ and $\alpha_2$, denoted by blue arrows in the figure, we obtain from it an annulus. Identifying the corresponding endpoints of $E'$, we obtain a link projection $E''$ in an annulus $A$, with an even number of endpoints on each boundary.\\

Then, we separately embroider endpoints of $E''$ on the outer and inner boundaries of $A$ in the manner discussed in the proof of Lemma \ref{spheres}, as in Figure \ref{closing off annulus}. Call the result $\overline{E''}$. Let $L'$ be the link in $S^3$ that corresponds to the link with projection $\overline{E''}$, and let $Q'$ be $L'$ together with the components corresponding to the augmenting components $W$. Condition (3) implies there is a torus $T'$ corresponding to $T$ in the complement of $Q'$ in $S^3$.\\

\begin{figure}[ht]
    \centering
    \includegraphics[scale=.3]{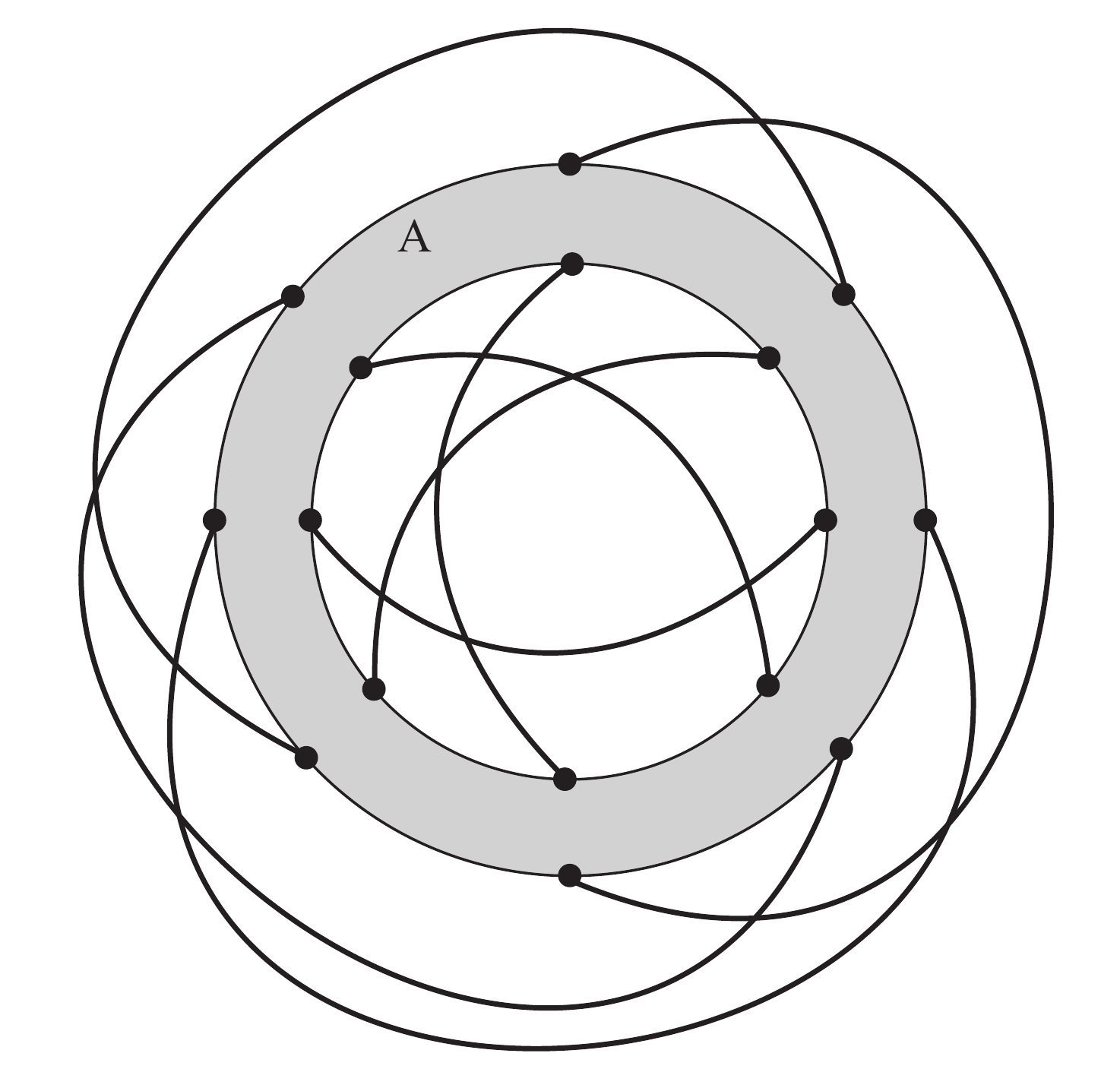}
    \caption{Closing off the two boundaries of $A$ with embroidery.}
    \label{closing off annulus}
\end{figure}

Using similar arguments as in the proof of Lemma \ref{spheres}, we check that $\overline{E''}$ satisfies the following conditions. By construction, $\overline{E''}$ is alternating and connected.  A similar argument as in the proof of Lemma \ref{spheres} shows $\overline{E''}$ to be reduced and $L'$ to be nontrivial. To see that $\overline{E''}$ is obviously prime, we need to consider some extra cases. Suppose again that there exists a disk $G \subset \mathbb{R}^2$ containing a crossing of $\overline{E''}$ such that $\partial G$ crosses $\overline{E''}$ transversely twice. We eliminate cases as before until the only case left is one in which both intersections on $\partial G$ are with embroidery arcs. Again as before, we eliminate the sub-case where both intersections on $\partial G$ are with the outer embroidery. By symmetry, we can also eliminate both intersections being with the inner embroidery.
If one intersection on $\partial G$ is with outer embroidery and the other is with the inner, then $G$ cannot contain an embroidery crossing, implying that $E''$ is not connected, a contradiction. 
To see that $\overline{E''}$ is not a 2-braid, we note that as in Figure \ref{closing off annulus}, the inner embroidery of $\overline{E''}$ contains regions that are not bigons, which cannot exist in a reduced alternating projection of a 2-braid.\\


Therefore, $\overline{E''}$ is a projection of a reduced non-trivial non-split prime alternating link that is not a 2-braid. Now we add the augmenting components to obtain $Q'$ again, and augment with one more component through the hole of the annulus, as shown in Figure \ref{donut hole augment}. We obtain the torus $T'$ in the complement of a new generalized augmented alternating link $Q''$ in a thickened annulus. This is a generalized augmented alternating link in $S^3$, which must therefore be hyperbolic. Hence, $T'$ must either be boundary parallel or compressible in $S^3 \setminus Q''$.\\

\begin{figure}
    \centering
    \includegraphics[scale=.3]{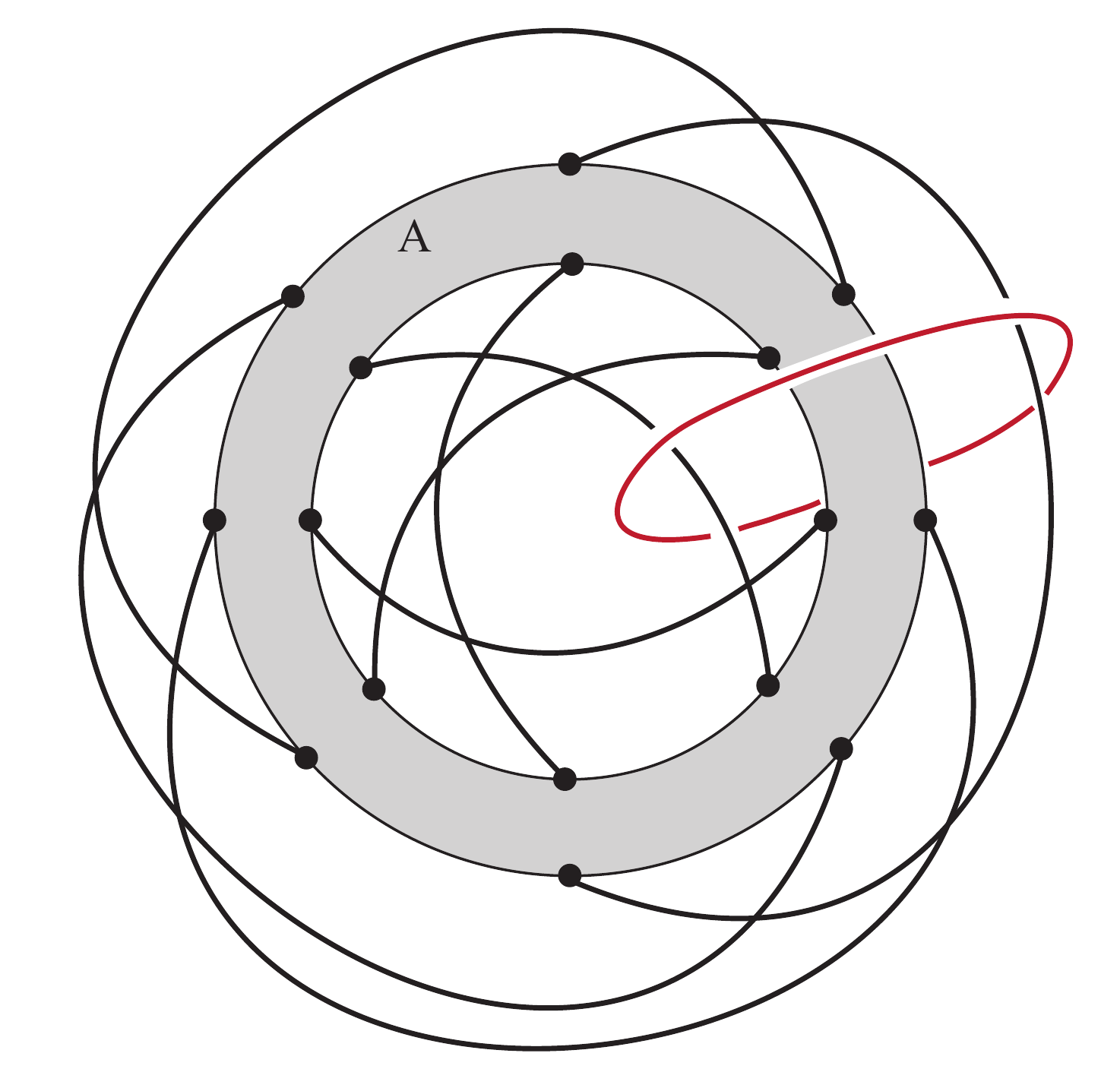}
    \caption{Augmenting once more to prevent compression of $T'$.}
    \label{donut hole augment}
\end{figure}

If $T'$ is boundary parallel, it must be boundary parallel to the side to which it bounds a solid torus and away from the boundary of the new link. Therefore $T'$ must have been boundary parallel in the universal cover, which implies $T$ must have been boundary parallel in the link complement $S\cross I\setminus Q$, a contradiction.\\

If $T'$ is compressible in $S^3 \setminus Q''$, it cannot compress outside the thickened annulus, since we removed the final augmenting component. So it must compress to obtain a sphere in the thickened annulus. Since $S^3 \setminus Q''$ is hyperbolic, the sphere must bound a ball to one side. So either all the components are to the inside or all to the outside. It cannot be the case that all components are inside, as the strand ends from the boundaries of $A$ are by default outside the sphere. If all the components are to the outside, the torus $T'$ compresses to the inside in $S\cross I\setminus Q$, so $T$ was not essential to begin with, a contradiction. 
\end{proof}

\begin{lemma}\label{annuli}
There are no essential annuli in $M \setminus \mathring{N}(Q)$.
\end{lemma}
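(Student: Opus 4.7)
The plan is to follow the template of Lemmas \ref{spheres}, \ref{primeQ}, and \ref{tori}. I will assume for contradiction that $A \subset M \setminus \mathring{N}(Q)$ is an essential annulus, lift a copy $\tilde{A}$ of $A$ to the universal cover $\mathbb{R}^2 \times I$ of $S \times I$, and construct an auxiliary generalized augmented alternating link $Q' \subset S^3$ in whose complement a copy of $A$ persists. Because a hyperbolic link complement in $S^3$ admits no essential annuli, the lifted copy must compress or be boundary-parallel, and projecting that witness back down to $M \setminus Q$ will yield a compression or boundary-parallelism for $A$, the desired contradiction.

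First I would reduce to the case $\partial A \subset \partial N(Q)$. Incompressibility of $S \times \{0\}$ and $S \times \{1\}$ in $M \setminus L$ from \cite{SMALL2017} is preserved in $M \setminus Q$, since adding further components back into the ambient manifold only destroys compression disks. If a boundary circle $c$ of $A$ on $\partial M$ is inessential there, it bounds an innermost disk on $\partial M$ disjoint from the remaining components of $\partial A$; together with a subdisk of $A$ this builds a sphere that bounds a ball by Lemma \ref{spheres}, permitting an isotopy of $c$ off $\partial M$. Once every boundary circle on $\partial M$ is essential, two such circles lying on the same component of $\partial M$ are freely homotopic in $M$ and hence isotopic on that component, so capping $A$ off with the cobounded subannulus gives a torus that by Lemma \ref{tori} is not essential, forcing $A$ to be boundary-parallel. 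If essential boundaries of $A$ lie on different components of $\partial M$, then $A$ is isotopic to a vertical annulus $\gamma \times I$ for some essential simple closed curve $\gamma \subset S$; cellularity of the alternating projection forces every such $\gamma$ to meet $\proj(L)$, contradicting $A \subset M \setminus Q$.

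With $\partial A \subset \partial N(Q)$, I would split based on the homotopy class of the core of $A$ in $M$. In the homotopically trivial case, each lift $\tilde{A}$ is a compact annulus, so the construction of $\overline{E}$ from the proof of Lemma \ref{spheres} produces a generalized augmented alternating link $Q' \subset S^3$ whose complement contains $\tilde{A}$; by \cite{Adams2}, $S^3 \setminus Q'$ is hyperbolic, so $\tilde{A}$ must be compressible or boundary-parallel there, and the compact witness projects through the covering map to a compression disk or boundary-parallelism for $A$ in $M \setminus Q$, contradicting essentiality. In the homotopically nontrivial case, the lifts of $A$ are noncompact strips and the first construction fails. I would instead adapt the second construction of Lemma \ref{tori}: choose a geodesic $\gamma'$ on $S$ homotopic to a power of the core of $A$ together with a transverse geodesic $\alpha$ crossing $\gamma'$ once or twice, build a disk $D \subset \mathbb{R}^2$ out of fundamental domains with two opposite sides on lifts of $\alpha$ and large enough that a connected portion of $\tilde{A}$ projecting onto all of $A$ lies above $D$, identify the two $\alpha$-sides of $D$, embroider both boundary circles of the resulting annulus, and augment once through the hole to obtain $Q'' \subset S^3$. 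The resulting annulus $A' \subset S^3 \setminus Q''$ must compress or be boundary-parallel by hyperbolicity, and the extra augmentation through the hole blocks compression outside the thickened annulus, so each case descends to a contradiction in $M \setminus Q$ exactly as in the final paragraphs of Lemma \ref{tori}.

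The principal obstacle, as in Lemma \ref{tori}, will be the homotopically nontrivial subcase: one must verify that after identifying opposite sides of $D$ and embroidering, $Q''$ really is a generalized augmented alternating link that is reduced, prime, and not a $2$-braid so that \cite{Adams2} applies, and that each potential compression disk or parallelism of $A'$ in $S^3 \setminus Q''$ descends correctly under the covering map to its counterpart in $M \setminus Q$. The boundary-on-$\partial M$ reduction, while delicate, is completed by standard innermost-disk arguments combined with cellularity of the projection and should be finished relatively quickly.
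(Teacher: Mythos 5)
Your proposal is a genuinely different route from the paper's. The paper proves Lemma \ref{annuli} with \emph{no} new lifting or embroidery at all: every subcase is reduced to facts already in hand, namely the hyperbolicity of $S \times I \setminus L$ (so $A$ is compressible or boundary-parallel there), the absence of essential spheres (Lemma \ref{spheres}), the absence of essential tori (Lemma \ref{tori}), and the primeness of $Q$ (Lemma \ref{primeQ}). The technique throughout is to take the boundary of a regular neighborhood of $A$ together with the adjacent pieces of $\partial N(Q)$ or $\partial(S\times I)$, manufacture a torus, sphere, or simpler annulus, and feed it to the earlier lemmas. This is considerably more economical than re-running the universal-cover construction, and it sidesteps the question of whether compressions/parallelisms in $S^3\setminus Q'$ descend.

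Beyond that structural comparison, your outline has concrete gaps. In the same-component subcase of Case (1), ``$T_A$ is not essential'' only gives that $T_A$ is compressible or boundary-parallel; the compressible case does not immediately make $A$ boundary-parallel (the compression could be to the side away from $\partial M$), and the boundary-parallel case requires analyzing which boundary $T_A$ is parallel to. The paper instead passes to $S \times I \setminus L$, rules out compressibility by producing a compressing disk for $S$, and in the boundary-parallel case studies the augmenting components trapped inside $T_A$: they must all be unknotted and unlinked in the solid torus bounded by $T_A$, which yields a compressing meridian and hence an essential sphere. For the different-components subcase, your claim that $A$ is isotopic to a vertical annulus $\gamma \times I$ is unjustified---$A$ could be knotted or linked with $Q$; the right argument again passes through $S \times I \setminus L$, where $A$ cannot be boundary-parallel, so it is compressible, giving a compressing disk for $S \times \{1\}$. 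Finally, in Case (2) your plan to compress or boundary-parallelize $\tilde A$ in $S^3 \setminus Q'$ and then ``project the witness down'' is not automatic: the compression disk or product region may run through the embroidery or the spherical caps, which do not lie over $M$. The paper's descent arguments for spheres and tori rely on the fact that those surfaces bound a ball or solid torus containing either all of $Q'$ or none of it; an annulus with boundary on $\partial N(Q')$ admits no such dichotomy, and you would need a separate argument (for instance via primeness of $Q'$ to control what compressions can look like) before the projection step is valid.
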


\begin{proof}
Let $A$ be an essential annulus in $S \times I \setminus Q$. As our annulus $A$ has two boundary components, exactly one of the following holds:

\begin{enumerate}
\item $A$ has boundary strictly on $\partial (S \times I)$,
\item $A$ has boundary strictly on $\partial(\mathring{N}(Q))$,
\item $A$ has boundary on both $\partial (S \times I)$ and $Q$.
\end{enumerate}

Note first that both boundaries of $A$ are nontrivial curves on $\partial (S \times I)$ or $\partial(\mathring{N}(Q))$. If both boundaries of $A$ are trivial, we can cap off $A$ with the disks bounded by the trivial boundaries to obtain an essential sphere, which we have eliminated in Lemma \ref{spheres}. If one boundary of $A$ is trivial and another boundary of $A$ is nontrivial, we can cap off the trivial boundary with the disk it bounds to obtain a disk in $S \times I \setminus Q$ bounded by the nontrivial boundary of $A$. However, no nontrivial curve on $\partial (S \times I \setminus Q)$ bounds a disk.\\

We split Case (1) into two sub-cases, characterized by whether the two components $\alpha$ and $\beta$ of $\partial A$ lie on exactly one or both boundary components of $S \times I$.\\

Suppose that the boundary curves $\alpha$ and $\beta$ lie on the same surface $S = S \times \{0\}$ or $S \times \{1\}$. Observe that $\alpha$ is isotopic through $A$ to $\beta$. This annulus remains in $S \times I \setminus L$, but cannot be essential in the original link complement by assumption of hyperbolicity. Suppose that $A$ is compressed by disk $D$ in $S \times I \setminus L$. Isotoping $\partial D$ to $\alpha$ implies the existence of a disk $D'$ by which $S$ may be compressed, which is impossible.\\

If $A$ is boundary parallel in $S \times I \setminus L$ but not $S \times I \setminus Q$, let $A'$ be the annulus on $S$ obtained by isotoping $A$ onto $S$ relative to $\alpha$ and $\beta$. We form a torus $T_A$ by gluing $A$ to $A'$ along their boundaries. Since $A$ is boundary parallel in $S \times I \setminus L$, then there must be only augmenting components $J_1, \ldots, J_n$ contained within $T_A$. Consider that all $J_i$ must be trivial in the solid torus bounded by $T_A$. To see this, the isotopy of $A$ onto $A'$ projects all $J_i$ to closed curves on $S$. Since the disks bounded by all $J_i$ are disjoint, then no two $J_i$ are linked, and so their projections can be disjoint on $S$ after an isotopy. If any $J_i$ were non-trivial in the solid torus bounded by $T_A$, then $J_i$ would also be non-trivial as a simple closed curve after projection onto $S$. This contradicts the assumption that $J_i$ bounds a disk in $S \times I$. Since all $J_i$ are trivial in the solid torus bounded by $T_A$, there exists a compressing meridian of $T_A$ which produces an essential sphere, a contradiction. \\

Suppose $A$ is an essential annulus in $S \times I \setminus Q$ with one boundary on the inner surface $S \times \{0\}$ and the other boundary on the outer surface $S \times \{1\}$. By the assumption that $S \times I \setminus L$ is hyperbolic, $A$ cannot be essential in $S \times I \setminus L$. As $A$ is not boundary parallel in $S \times I \setminus L$, then $A$ is compressible in $S \times I \setminus L$. For any compressing disk $D$, then $\partial D$ is isotopic through $A$ to a boundary of $A$ on $S \times \{1\}$. This implies the existence of a compressing disk $D'$ for $S \times \{1\}$, which is impossible. This concludes Case (1).\\

We now discuss Case (2) in three sub-cases, characterized by whether the two boundary curves of $A$ both lie on $\partial N(L)$, both lie on $\partial N(J)$, or one on $\partial N(L)$ and one on $\partial N(J)$.\\


Suppose $A$ is an essential annulus in $S \cross I \setminus \mathring{N}(Q)$ with boundaries on distinct components $\partial N(L_1)$ and $\partial N(L_2)$ of $L$. The two boundary components are nontrivial curves on $\partial N(L_1)$ and $\partial N(L_2)$. Then $A$ is not essential in $S \cross I \setminus \mathring{N}(L)$. It is not boundary parallel because its boundaries are on distinct components, thus it must be compressible. There must exist a compressing disk $D$ with boundary a nontrivial curve on $A$. But then $D$ can be isotoped to have boundary on the boundary on $A$. We have thus constructed a compressing disk for a nontrivial curve on $\partial N(L_1)$ or $\partial N(L_2)$. This is a contradiction to the incompressibility of nontrivial  curves on the boundary of regular neighborhoods of the link components in $L$.\\

Suppose $A$ has both boundaries on $\partial N(C)$ for some component $C$ of $Q$, either from the link $L$ or an augmenting component.  The two boundaries of $A$ must be parallel curves. Note that the two  curves cannot be meridians on $\partial N(L)$ as we already proved that $Q$ is prime.\\

We can form a torus from $A$ and a second annulus $A'$ on $\partial N(C)$. Note that we have two choices for that second annulus on $\partial N(C)$, the torus generated by one of which, denoted $T_1$, separates $C$ from $\partial S \cross I$.  Since we have already eliminated essential tori, $T_1$ must either be boundary-parallel or compressible in $S \cross I \setminus Q$. If $T_1$ is boundary-parallel, it bounds a solid torus containing $C$ as core curve. But this implies that $A$ is also boundary-parallel,  a contradiction. \\

So $T_1$ must be compressible. But any compression yields a sphere that separates $C$ from $\partial S \cross I$. So it is an essential sphere, which is a contradiction. 




Suppose $A$ has boundaries on two distinct augmenting components $J$ and $J'$. Then the boundary of a regular neighborhood of $J \cup J' \cup A$ is a torus $T_A$ that cannot be boundary-parallel since $J$ and $J'$ are to one side and the boundaries of $S \cross I$ are to the other side. If it compresses, the boundary of the compressing disk can be pushed onto $\partial N(J)$, a contradiction to the fact we have already eliminated any such disks. But then $T_A$ is essential, a contradiction to the fact we have eliminated any such tori.\\




Suppose $A$ has one boundary on $\partial N(L)$ and the other boundary on $\partial N(J)$. Taking the boundary of a regular neighborhood of $A \cup L$ (or $A \cup J$), we obtain a new annulus $A'$ with both boundaries on $J$ (or the same component of $L$). The annulus $A'$ is essential, since it is not boundary parallel, and if there were a curve on $A'$ bounding a compressing disk of $A'$, an isotopic curve would bound a compressing disk of $A$, which is impossible. But we have just eliminated essential annuli with both boundaries on $J$ (or the same component of $L$). This completes Case (2).\\

For Case (3), suppose $A$ has one boundary on $\partial N(Q)$ and the other boundary on $S \times \{1\}$ or $S \times \{0\}$. Taking the boundary of a regular neighborhood of $A$ and the component of $Q$ on which $\partial A$ lies, we obtain a new annulus $A'$ with both boundaries on the same component of $\partial(S \cross I)$. the annulus $A'$ is essential, since it is not boundary parallel by construction, and if there were a curve on $A'$ bounding a compressing disk of $A'$, an isotopic curve would bound a compressing disk of $A$, which cannot exist. But we have already eliminated essential annuli with both boundaries on the same component of $\partial(S \cross I)$ in Case (1).
\end{proof}

\begin{proof}[Proof of Theorem \ref{orientable theorem}]
By Lemmas \ref{spheres}, \ref{tori} and \ref{annuli} there are no essential spheres, tori or annuli in $M \setminus Q$.  Therefore, a generalized augmented cellular alternating link in a thickened orientable surface is hyperbolic.
\end{proof}

\section{Generalized Augmented cellular Alternating Links in $I$-Bundles}\label{non-orientable section}

We now extend to more general $I$-bundles over orientable or non-orientable surfaces. 

\begin{definition}
Let $S$ be a closed surface, orientable or not and let $N$ be an $I$-bundle over $S$. We exclude $N$ being $S \cross I$ when $S$ is the projective plane. Otherwise, define $M$ to be $N$ when the Euler characteristic of $S$ is negative. Define $M$ to be $N \setminus \partial N$ when $S$ has Euler characteristic 0, and define $M$ to be $N$ with its spherical boundaries capped off with balls in the case the Euler characteristic of $S$ is positive.
\end{definition} 

Let $L$ be a prime link in $N$ with a reduced cellular alternating projection to $S$. 
Note that a projection on a surface that is alternating on the surface when considered in $S \cross I$ may not be alternating on the surface when considered in a twisted $I$-bundle over the surface.

\begin{definition}\label{main definition}
Let $J_1, ..., J_n$ be $n$ loops in $N \setminus L$ which are trivial in $N$ such that

\begin{enumerate}
    \item each $J_i$ intersects $S$ in exactly two points, one in each of two non-adjacent complementary regions of $L$,
    \item If two $J_i$ intersect $S$ in the same pair of regions,  they are not isotopic in $N \setminus L$. 
    \item each $J_i$ bounds a disk $E_i$ perpendicular to $S$ in $N$ where $E_i \cap E_j = \varnothing$ for $i \neq j$.
\end{enumerate}

Then $Q = L \cup (J_1 \cup \dots \cup J_n)$ is said to be a {\it generalized augmented cellular alternating link} in $N$.

\end{definition}

Let us recall Theorem \ref{general theorem}:

\begingroup
\def\thetheorem{\ref{general theorem}}
\begin{theorem}
Let $Q$ be a generalized augmented cellular alternating link in $N$, an $I$-bundle over a closed surface $S$, excluding $S \cross I$ when $S$ is a projective plane. Then $M \setminus Q$ has a complete hyperbolic metric, unless one of the following occurs:
\begin{enumerate} 
\item $S$ is a sphere and $L$ is a 2-braid knot or link.
\item $S$ is a projective plane and $L$ is a 2-braid link.
\item $S$ is a projective plane and there exists a simple closed curve on $S$ that intersects the projection once.
\end{enumerate}
  If $\chi(S) <  0$, then $\partial M$ can be realized as totally geodesic surfaces in the hyperbolic metric on $M \setminus Q$.
\end{theorem}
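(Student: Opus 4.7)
The plan is to reduce to the orientable case established in Theorem \ref{orientable theorem} by passing to a suitable two-fold cover, and then to descend the hyperbolic structure using Mostow--Prasad rigidity. If $N = S \times I$ with $S$ orientable, Theorem \ref{orientable theorem} applies directly, with case (1) of the statement matching its single exclusion. Otherwise, I would construct a covering $p \colon \tilde{N} \to N$ of degree two such that $\tilde{N} = \tilde{S} \times I$ with $\tilde{S}$ an orientable closed surface: when $S$ is non-orientable and $N = S \times I$, take $\tilde{S}$ to be the orientation double cover of $S$; when $N$ is a twisted $I$-bundle, take the cover that pulls the bundle back to a product. The exclusion of $\mathbb{RP}^2 \times I$ from the hypotheses is necessary since no such cover yields a hyperbolic quotient (the projective plane cannot be realized as a totally geodesic boundary of any hyperbolic 3-manifold).

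Next, the link $Q = L \cup \bigcup_i J_i$ lifts to a link $\tilde{Q} = \tilde{L} \cup \bigcup_i \tilde{J}_i$ in $\tilde{N}$. Since each $J_i$ is null-homotopic in $N$, it lifts to two disjoint null-homotopic components in $\tilde{N}$, each bounding a perpendicular disk. I would verify that $\tilde{Q}$ is itself a generalized augmented cellular alternating link: the projection $\tilde{P}$ is cellular alternating because $p$ is a local diffeomorphism sending complementary disks to complementary disks; $\tilde{P}$ is reduced and obviously prime by a lifting argument analogous to Lemma \ref{lift lemma}, since any reducing or primeness-violating disk on $\tilde{S}$ would descend to an immersed such disk on $S$. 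The augmenting conditions---non-adjacent incidence, non-isotopy of pairs sharing the same region pair, and disjointness of bounding disks---all transfer to the lifts.

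The three exclusions in the statement correspond exactly to the cases where $\tilde{Q}$ fails to satisfy the hypotheses of Theorem \ref{orientable theorem}. In case (2), a 2-braid $L$ in the twisted $I$-bundle over $\mathbb{RP}^2$ lifts to a 2-braid configuration on $S^2$. In case (3), an orientation-reversing simple closed curve on $\mathbb{RP}^2$ meeting $L$ once lifts to a single circle on $S^2$ meeting $\tilde{L}$ in exactly two points, contradicting the obvious primeness of $\tilde{P}$ (or trivializing the lift). Outside these three cases, Theorem \ref{orientable theorem} applies to $\tilde{Q}$, so $\tilde{M} \setminus \tilde{Q}$ admits a complete finite volume hyperbolic structure, with $\partial \tilde{M}$ totally geodesic whenever $\chi(\tilde{S}) = 2\chi(S) < 0$.

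Finally, I would descend the metric. The deck transformation $\tau$ of the cover $\tilde{M} \setminus \tilde{Q} \to M \setminus Q$ is a self-homeomorphism of a finite volume hyperbolic 3-manifold; by Mostow--Prasad rigidity, $\tau$ is homotopic, hence isotopic, to an isometric involution $\phi$, and replacing the free $\mathbb{Z}/2$-action generated by $\tau$ with the isotopic one generated by $\phi$ preserves the diffeomorphism type of the quotient while producing a free isometric action. The quotient inherits the desired complete finite volume hyperbolic metric on $M \setminus Q$, and the totally geodesic boundary descends when $\chi(S) < 0$. I expect the main obstacle to be the third step: carefully tracking the lifting behavior across the various $I$-bundle configurations (twisted bundles over Klein bottles, non-orientable higher-genus surfaces, twisted $I$-bundles over projective planes) to confirm that the three listed exclusions genuinely capture every failure of the lifted projection to satisfy the orientable hypothesis, and in particular ruling out any subtle way in which $\tilde{L}$ could be a 2-braid on $S^2$ in the remaining cases.
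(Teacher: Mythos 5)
Your approach is the same as the paper's: pass to a two-fold cover to land in the orientable product case of Theorem~\ref{orientable theorem}, verify that the lifted link $\tilde Q$ satisfies the hypotheses there, and then descend the hyperbolic structure using Mostow rigidity to realize the deck transformation as an isometry. The paper organizes this into four cases according to the orientability of $N$ and $S$ and whether the bundle is twisted, cites \cite{SMALL2017} for the lift being prime and cellular alternating, gives a fundamental-domain argument that the lift is reduced, checks the augmenting conditions lift, and then invokes Theorem~\ref{orientable theorem} and Mostow; your outline mirrors all of this.

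There is, however, a concrete gap in your covering step. You assert that one can always find a degree-two cover $\tilde N = \tilde S \times I$ with $\tilde S$ a closed \emph{orientable} surface. This is true when $N$ is a non-orientable product (take the orientation double cover of $S$), when $N$ is a twisted bundle over an orientable $S$ (the untwisting double cover is still orientable), and when $N$ is an orientable twisted bundle over non-orientable $S$ (here the twisting class equals $w_1(S)$, so untwisting and orienting coincide). But when $N = S\tilde\times I$ with both $N$ and $S$ non-orientable, the twisting class is distinct from $0$ and from $w_1(S)$, so the untwisting double cover yields a product over a surface that is \emph{still non-orientable}, while the orientation double cover of $N$ yields an orientable manifold that is \emph{still a twisted} $I$-bundle over a non-orientable surface. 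Neither is an orientable product in one step; a degree-four cover (equivalently, two successive degree-two covers) is required. The paper addresses exactly this by observing that the orientation double cover of $N$ in this situation lands in an already-handled case and iterating. You flag the bookkeeping of bundle configurations as a concern, but the specific failure is structural: your opening claim of a single degree-two cover to an orientable product does not hold here, and your argument should be adjusted to pass through an intermediate cover as the paper does.
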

\addtocounter{theorem}{-1}
\endgroup


\begin{proof}

We have already proved the result for $N$ orientable and $S$ orientable. Utilizing results from the proof of Lemma 14 in \cite{SMALL2017}, we consider four cases:

\begin{enumerate}

\item  $N = S \tilde{\times} I$, where $N$ is non-orientable and $S$ is orientable. Then, $S$ cannot be a sphere, and $N$ is double-covered by $\tilde{N} = \tilde{S} \times I$ for $\tilde{S}$ an orientable double cover of $S$.

\item $N = S \times I$, where $N$ is non-orientable and $S$ is non-orientable. Then, $N$ is double-covered by $\tilde{N} = \tilde{S} \times I$ for $\tilde{S}$ an orientable double cover of $S$.

\item $N = S \tilde{\times} I$, where $N$ is orientable and $S$ is non-orientable. Then, $N$ is double-covered by $\tilde{N} = \tilde{S} \times I$ for $\tilde{S}$ an orientable double cover of $S$.

\item $N = S \tilde{\times} I$, where $N$ is non-orientable and $S$ is non-orientable. Then, $N$ is double-covered by $\tilde{N} = \tilde{S} \tilde{\times} I$ for $\tilde{S}$ non-orientable and $\tilde{N}$ orientable.
\end{enumerate}

 In \cite{SMALL2017}, it is shown that in all four cases the lift $\tilde{L}$ of the link $L$ to $\tilde{N}$ is prime and cellular alternating in $\tilde{N}$.
It remains to check that $\tilde{L}$ is reduced. Following \cite{SMALL2017}, we think of $S$ as a polygon $R$ with edges identified. Then $\tilde{S}$ is two copies of $R$ glued together with edges identified, denoted by $\tilde{R}$. Suppose there is a Type I crossing in the projection of $\tilde{L}$ onto $\tilde{R}$. $L$ is reduced, so to avoid having a Type I crossing in $R$, it must be the case that the crossing occurs where the two copies of $R$ are glued. But since identified edges of $R$ are glued, in a single copy of $R$, there is also a Type I crossing, a contradiction.\\

Now, we consider the augmenting components. Given an augmenting component $J_i$ in $N$, since it is trivial in $N$, it lifts to two  trivial components $\tilde{J_i}_1$ and $\tilde{J_i}_2$ in $\tilde{N}$. Each $\tilde{J_i}_j$ intersects $S$ in exactly two points, and since these two points must occur in a single copy of $R$, they must occur one in each of two non-adjacent regions.  Finally, note that each $\tilde{J_i}$ bounds a perpendicular disk in $\tilde{N}$, and again since $J_i$'s do not intersect $\partial N$, their lifts bound disjoint disks in $\tilde{N}$.\\

Therefore, in Cases (1), (2), and (3), the lift of $Q$, $\tilde{Q}$, is a generalized augmented cellular alternating link in a thickened orientable surface, and by Theorem \ref{orientable theorem}, $\tilde{Q}$ is hyperbolic in the manifold $M'$ corresponding to capping off sphere boundaries or shaving off torus boundaries. 
By the Mostow Rigidity Theorem, we know that the deck transformation associated to the covering $\tilde{N} \rightarrow N$ can be realized as an isometry of $M'$. The quotient under the action of this isometry, which is $M \setminus Q$, is then hyperbolic.\\
 In Case (4), $\tilde{Q}$ is a generalized augmented cellular alternating link in an orientable twisted $I$-bundle over a non-orientable surface. This case then reduces to Case (2).
\end{proof}

\section{Rubber Band Links}\label{rubber band section}
\begin{definition}
Let $G$ be a graph on a closed orientable surface $S$. Call $G$ a \textit{cage graph} if it  is connected, has no parallel edges, all complementary regions are disks, and no trivial circle in $S$ intersects $G$ exactly once. 
\end{definition}
\begin{definition}
Let $G$ be a cage graph on a closed orientable surface $S$. Replace each vertex $v_i$ with an unknotted component $V_i$, called a vertex component, lying in $S$ that is the boundary of a small disk neighborhood of $v_i$. Replace each edge $e_i$ with an unknotted component $E_i$, called an edge component, that projects into the edge and such that it intersects $S \times \{\frac{1}{2}\}$ in exactly two points, where each of the two intersections occurs in the two different regions bounded by the two trivial components surrounding the edge's two vertices. We call the link made up of $V_1, \ldots, V_m,  E_1, \ldots, E_n$ a \textit{rubber band link} in $S \times I$.
\end{definition}

\begin{figure}[h]
    \centering
    \includegraphics[scale=.7]{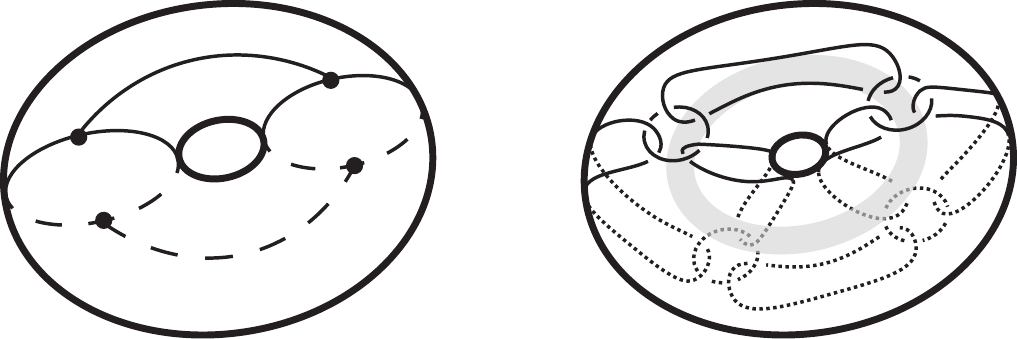}
    \caption{A rubber band link.}
\end{figure}

In the case that a rubber band link is hyperbolic, it has many very nice properties. For instance, all of the twice-punctured disks bounded by the edge components are totally geodesic. Moreover, there is a reflection in the projection surface that preserves the thickened surface and the link. Hence, its fixed point set is totally geodesic as well. In particular, this means that the punctured disks bounded by the vertex components are all totally geodesic. In addition, the punctured surface obtained from the projection surface when we cut along the vertex components is also totally geodesic. The twice-punctured disks bounded by the edge components must intersect all of the totally geodesic surfaces on the projection surface in right angles. The link complement can be decomposed into two pieces, each of which is topologically a thickened surface but each of which has an ideal polygonal boundary on one side with all neighboring polygons meeting at right angles, as in the decompositions of fully augmented alternating links in the 3-sphere \cite{Purcell} and $T \times I$ \cite{Kwon}.

\begin{corollary} \label{rubber band links}
Every rubber band link has hyperbolic complement in $S \times I$.
\end{corollary}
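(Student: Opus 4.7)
The plan is to realize each rubber band link $Q$ in $S \times I$ as an ambient isotopic generalized augmented cellular alternating link $Q^{*}$ and then invoke Theorem \ref{general theorem}. For each vertex $v_i$ of the cage graph $G$, I would replace the small vertex circle $V_i$ by the boundary $V_i^{*}$ of a \emph{star} of $v_i$: a small disk around $v_i$ in $S$ together with thin tubular neighborhoods of each half-edge incident to $v_i$, extended slightly past the midpoint of every such edge. Along each edge $e_{ab}$ of $G$ the stars of $v_a$ and $v_b$ overlap near the midpoint of $e_{ab}$, and a small vertical perturbation produces two transverse crossings between $V_a^{*}$ and $V_b^{*}$ bounding a bigon. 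I would choose the over/under at each bigon so that the projection $L^{*} = V_1^{*} \cup \cdots \cup V_m^{*}$ onto $S$ is alternating and the two crossings have opposite signs (a Reidemeister~II configuration). Keeping each edge component $E_i$ unchanged yields $Q^{*} = L^{*} \cup (E_1 \cup \cdots \cup E_n)$. Because each bigon is of Reidemeister~II type and each star grows within $S$ keeping the piercing points of $E_i$ in the interior of $V_a^{*}$, respectively $V_b^{*}$, the passage from $Q$ to $Q^{*}$ is an ambient isotopy in $S \times I$ that never meets the perpendicular disks bounded by the edge components; hence $M \setminus Q \cong M \setminus Q^{*}$.

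Next I would verify that $Q^{*}$ meets Definition \ref{main definition}. The complementary regions of $L^{*}$ on $S$ are the $m$ star interiors, the $n$ bigons, and one disk for each face of $G$, all topological disks because $G$ cellularizes $S$; alternation is by construction. Cage graphs have minimum vertex degree at least $2$ (otherwise a small circle around a pendant vertex would meet $G$ exactly once), so every bigon crossing adjoins regions that themselves contain further crossings, making $L^{*}$ reduced. Each $E_i$ is a perpendicular augmenting component bounding a disk twice-punctured by $L^{*}$, with punctures lying in two non-adjacent vertex star interiors separated by the bigon of $e_i$; distinct edge components pierce distinct pairs of vertex regions because $G$ has no parallel edges. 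Cage graphs have at least three vertices (two vertices without loops or parallel edges admit at most a single edge, which would be a bridge, contradicting the ``no trivial circle meets $G$ once'' axiom), so $L^{*}$ has at least three components and cannot be a $2$-braid. Since $S$ is orientable, the projective-plane exceptions of Theorem \ref{general theorem} do not apply, and the theorem delivers a complete finite-volume hyperbolic metric on $M \setminus Q^{*} \cong M \setminus Q$.

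The step I expect to be the main obstacle is verifying that $L^{*}$ is obviously prime. By parity of intersection with each closed curve $V_i^{*}$, the boundary of any obstructing disk $D$ must cross $L^{*}$ transversely at two points lying on a single $V_i^{*}$. A naive candidate is a small loop drawn in a face region around a short main-body arc of $V_i^{*}$ between two consecutive fingers, but the corresponding $2$-tangle decomposition is trivial---the ``small'' side contains only an unknotted arc of $V_i^{*}$ and no crossing, while the opposite side contains the rest of $L^{*}$ only via closed vertex curves lying entirely within it, so no genuine primeness violation arises. A careful case analysis using $2$-edge-connectivity of the cage graph (which is precisely the ``no trivial circle meets $G$ exactly once'' axiom, since a bridge would yield such a circle) should rule out any non-trivial obstructing disk by forcing $\partial D$ to correspond to a separating simple closed curve in $S$ meeting $G$ in essentially one edge, contradicting the hypothesis. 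With obvious primeness in hand, the preceding argument completes the proof.
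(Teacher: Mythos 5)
Your proposal fails at the ambient‑isotopy step, and the failure is not fixable by tweaking the construction. At each bigon where $V_a^{*}$ and $V_b^{*}$ overlap, the alternating condition forces the two crossings to form a clasp: as you travel along $V_a^{*}$ through the bigon you must go over at one crossing and under at the other, and likewise for $V_b^{*}$ with the roles reversed. A clasp is a Hopf‑link tangle; the two stars are genuinely linked in $S\times I$. By contrast, the original vertex circles $V_a$, $V_b$ are split (they bound disjoint disks in $S\times\{\tfrac12\}$). Hence the passage from $V_i$ to $V_i^{*}$ cannot be an ambient isotopy. You noted that a Reidemeister~II configuration (one strand consistently over, opposite crossing signs) would be undone by an isotopy, but that configuration is precisely the one that is \emph{not} alternating along either strand; you cannot simultaneously have $L^{*}$ alternating and the modification be an isotopy. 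So $M\setminus Q^{*}$ is a different manifold from $M\setminus Q$, and Theorem~\ref{general theorem} applied to $Q^{*}$ tells you nothing about $M\setminus R$.

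The paper's proof overcomes exactly this obstacle using a different mechanism: rather than isotoping $R$ to an alternating link, it performs at each edge component's twice‑punctured disk $F_i$ the cut/half‑twist/reglue operation of \cite{Adams3}. This changes the link type (it inserts the clasp your construction also produces), so the resulting link $Q$ is not isotopic to $R$; but Adams' theorem guarantees the two complements are nonetheless homeomorphic after the reidentification, with the same hyperbolic volume, provided each $F_i$ is incompressible. The paper then verifies, much as you outline in your second and third paragraphs, that $Q$ is a generalized augmented cellular alternating link and hence hyperbolic by Theorem~\ref{orientable theorem}, and it checks the incompressibility of the successive $F_i'$ so that the sequence of cut‑and‑twist operations can be reversed to recover $R$. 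Your primeness and reducedness checks for $L^{*}$ are roughly in the right spirit and parallel the paper's checks for its $L$, but without the thrice‑punctured‑sphere surgery trick there is no bridge from hyperbolicity of the alternating model back to hyperbolicity of the rubber band link itself. You should also note the paper's extra care for the exceptional case $S=S^2$ with $L$ a $2$‑braid, which it resolves by replacing one half twist with a full twist; your ``at least three vertices'' argument is a reasonable alternative for avoiding that case, but it is moot given the isotopy gap.
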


\begin{proof}
Let $R$ be a rubber band link. Note that each edge component $E_i$ of $R$ bounds a twice-punctured disk, or equivalently a thrice-punctured sphere $F_i$. We utilize the results of \cite{Adams3}, where it is proven that cutting open an oriented finite volume hyperbolic 3-manifold along an incompressible thrice-punctured sphere $F$ and then reidentifying the two copies of $F$ by any orientation-preserving homeomorphism of $F$ will give another hyperbolic 3-manifold with the same volume. At each $F_i$, we perform an operation described in Corollary 5.1 of \cite{Adams3}, but for $R$ in $S \times I$: Cut $S \times I \setminus R$ open along $F_1$, twist a half twist, and re-identify. Figure \ref{half twist} and Figure \ref{half twist graph} provide illustration. Repeat the operation for $F_2, \dots, F_n$. We obtain a new link complement $S \times I \setminus Q$. Note that $E_i$ are not changed in the process, but we will replace notation for each $E_i$ in $R$ by $E_i'$ in $Q$.\\

\begin{figure}[h]
    \centering
    \includegraphics[scale=.3]{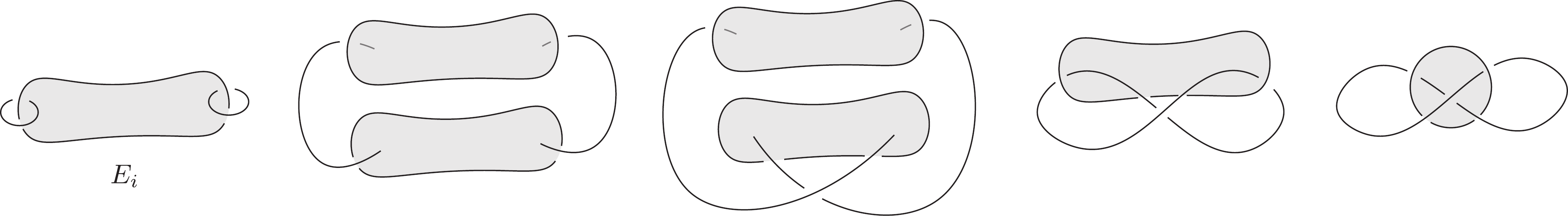}
    \caption{Performing a half-twist operation at an $F_i$.}
    \label{half twist}
\end{figure}

\begin{figure}[h]
    \centering
    \includegraphics[scale=.7]{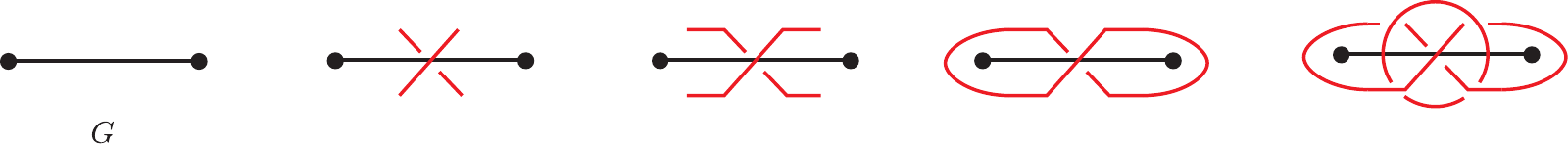}
    \caption{An equivalent way of obtaining the result of the operation directly from the original graph.}
    \label{half twist graph}
\end{figure}

We can choose the half twists so that $Q$ is a generalized augmented cellular alternating link. To see this, consider $L = Q \setminus (E_1 \cup \cdots \cup E_n)$. A projection $P$ of $L$ to $S \times \{\frac{1}{2}\}$ can be obtained in the following way: At each edge of $G$, place a crossing so that the edge bisects the crossing. Now consider each vertex $v$ of $G$. For the crossings on the edges it is incident to, extend the two strands closer to $v$ of each of these crossings until these strands hit a circle of radius $\epsilon$ about $v$. Let the set of arcs in this circle whose endpoints are not on strands from the same crossing be in the projection. See Figure \ref{crossing at edge}.\\

\begin{figure}[h]
    \centering
    \includegraphics[scale=.5]{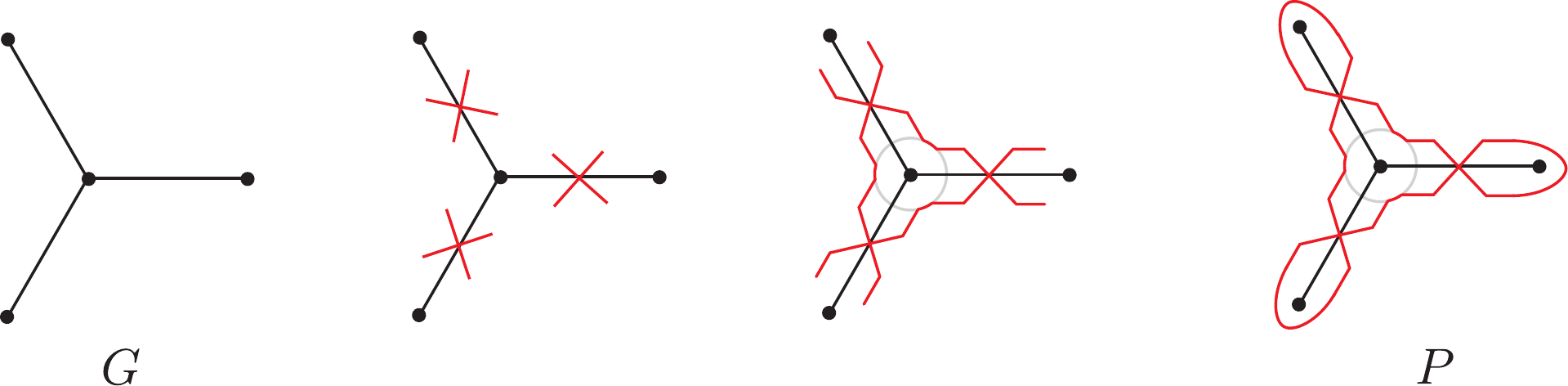}
    \caption{Obtaining a projection $P$ of $L$ directly from $G$.}
    \label{crossing at edge}
\end{figure}

$P$ is reduced: Suppose it has a Type I crossing. Then there exists a vertex incident to 1 edge, i.e., a leaf. But then there exists a trivial circle in $S$ intersecting $P$ exactly once. Suppose it has a crossing that can be removed by a flype. Then there exists a sphere in $S \times I$ punctured exactly twice by the two strands of $L$ near this crossing. In the projection onto $S \times \{\frac{1}{2}\}$, this sphere projects to a trivial circle intersecting $P$ exactly twice by the two strands of $P$ near this crossing. Then this circle intersects $G$ exactly once, a contradiction to our condition.\\

$L$ is cellular alternating: Each crossing corresponds to a half twist, and we can always choose our half twists so that $L$ is alternating. Also, all complementary regions of $L$ are disks: Note first that all complementary regions of $L$ containing a vertex are disks. Note also that all other complementary regions of $L$ are isotopic to the complementary regions of $G$, and so are disks.\\

Since $L$ is cellular alternating, we know $P$ is a connected projection of $L$: Suppose not. Then there exists a  simple closed curve $\alpha$ on $S$ avoiding $P$ that does not bound a disk on $S$ disjoint from $P$. But $\alpha$ must be contained in a complementary region, and as every complementary region of $L$ is a disk, $\alpha$ must bound a disk on $S$ disjoint from $P$, a contradiction.\\

$L$ is prime: Since $P$ is a reduced cellular alternating projection, by \cite{SMALL2017}, $L$ is prime if and only if $P$ is obviously prime. Suppose $L$ is not prime, i.e., there exists a disk $D$ in $S$, whose boundary intersects $P$ transversely at two points, containing a crossing of $P$. As this crossings of $P$ correspond to an edge $e$ of $G$, and the strands of the crossing follow along $e$, $e$ cannot intersect $\partial D$ twice, since otherwise $\partial D$ intersects $P$ at least 4 times. By our condition for $G$, $e$ cannot intersect $\partial D$ exactly once. If $e$ does not intersect $\partial D$, there must exist a path $e, \ldots, e'$ in $G$ such that $e'$ intersects $\partial D$ exactly once, for otherwise $\partial D$ would not intersect $P$ twice. This again contradicts our condition for $G$. Therefore, $L$ is a reduced prime cellular alternating link in $S \times I$.\\

Now we consider components $E_1', \ldots, E_n'$. Note first that each $E_i'$ corresponding to edge $e_i$ intersects $S \times \{\frac{1}{2}\}$ in exactly two points, one in each of the two regions corresponding to the two vertices incident to $e_i$, and these two regions are not adjacent. Suppose $E_i'$ and $E_j'$ intersect $S \times \{\frac{1}{2}\}$ in the same pair of regions. Then their corresponding edges, $e_i$ and $e_j$, are parallel edges, a contradiction to our condition for $G$. Finally, note that each $E_i'$ bounds a perpendicular disk in $S \times I$, and all such perpendicular disks are disjoint.\\

Therefore, $Q$ is a generalized augmented cellular alternating link. Then, there exists a sequence of operations, where we repeatedly cut $S \times I \setminus Q$ along an $F_i'$, the twice-punctured disk bounded by $E_i'$, twist a half twist, and reidentify, which yields $R$. In the special case that $L$ is a 2-braid and $S$ is a sphere, we can replace a half twist with a full twist for one of the $E_i$, adding a crossing to $P$, and changing it from being a 2-braid.\\

Note that at each intermediate stage in the process, the $F_i'$ we are about to operate on is incompressible in $S \times I \setminus Q$. To see this, suppose there did exist a compressing disk $D$ of $F_i'$. If $\partial D$ loops exactly one puncture on $F_i'$, then $D$, together with the once-punctured disk on $F_i'$ bounded by $\partial D$, forms a once-punctured sphere in $S \times I \setminus Q$, which is impossible. If $\partial D$ loops both punctures on $F_i'$, then it is isotopic through $F_i'$ to $E_i'$. Then $E_i'$ bounds a disk in $S \times I \setminus Q$. Taking a neighborhood of this disk, we obtain an essential sphere in $S \times I \setminus Q$, with $E_i'$ to one side of the sphere. This contradicts hyperbolicity of $Q$ as stated by Theorem \ref{orientable theorem}.\\

Therefore, as $M \setminus Q$ is hyperbolic by Theorem \ref{orientable theorem},  Theorem 4.1 of \cite{Adams3} implies $M \setminus R$ is hyperbolic with the same volume.
\end{proof}

\section{Further Applications}\label{applications section}

A twist region in a link projection is a sequence of end-to-end bigons, including a single crossing that does not touch a bigon as a twist region. A projection is twist-reduced if flypes have been applied to minimize the number of distinct twist regions. \\

In \cite{Lackenby}, both lower and upper bounds on the volume of a prime alternating link in $S^3$ were provided in terms of the number of twist regions present in a twist-reduced alternating diagram of the link. Corollaries $2$ and $3$ of the same paper apply this lower bound to prove that hyperbolic alternating links and augmented alternating links in $S^3$ together form a closed subset of the set of hyperbolic links in $S^3$ in the geometric topology.\\

Recently, J. Howie and J. Purcell proved a lower volume bound for \textit{twist-reduced} weakly alternating links in thickened surfaces as a function of twist number in \cite{HP}. This lower bound is restated in \cite{KP} with an upper bound that will be used later, so we will refer to both bounds as they appear in \cite{KP}.\\

Arguments following almost exactly as those ones made in \cite{Lackenby} can be made for augmented cellular alternating links in thickened surfaces by applying the lower bound in \cite{HP}. In this case, we restrict  augmentations from the general case, so that the disks bounded by augmented components are punctured exactly twice by the underlying cellular alternating link.
However, we first verify that the class of links considered throughout this paper indeed satisfy the requirements of \cite{KP}.

\begin{lemma} \label{twist-reduced WGA}
If $K$ is a hyperbolic cellular alternating link in $S \times I$, then there is a twist-reduced weakly generalized alternating projection $\pi(K)$ on $S \times \{\frac{1}{2}\}$.
\end{lemma}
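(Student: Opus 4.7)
The plan is to start from the cellular alternating projection $\pi(K)$ guaranteed by hypothesis and verify, after a sequence of flypes, the two required conditions: that $\pi(K)$ is weakly generalized alternating in the sense of \cite{HP}, and that it is twist-reduced.

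For the first condition, the definition of a weakly generalized alternating projection in \cite{HP} requires alternation, cellularity of the complementary regions on $S$, and a representativity/obvious-primeness condition. Alternation and cellularity are immediate from the hypothesis that $K$ is cellular alternating. The remaining primeness-type condition should follow from hyperbolicity via the theorem quoted from \cite{SMALL2017} above: if $\pi(K)$ failed to be obviously prime, then there would be a simple closed curve on $S$ meeting $\pi(K)$ twice and bounding a disk containing crossings, which after standard arguments would yield an essential sphere or annulus in $S \times I \setminus K$, contradicting hyperbolicity. So $\pi(K)$ is already weakly generalized alternating before any modification.

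Next, I would iteratively apply flypes to $\pi(K)$ to reduce the number of distinct twist regions, stopping when no further flype decreases the count. A flype is a local move within an embedded disk on $S$, so it preserves the alternating pattern, the cellularity of the complementary regions (those outside the flyping disk are unchanged and those inside remain disks), and the link isotopy class of $K$. Since the number of twist regions is a nonnegative integer, the procedure terminates and yields a twist-reduced cellular alternating projection of $K$.

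The main obstacle I anticipate is verifying that flypes preserve the obvious-primeness condition required for weakly generalized alternating. I would address this by noting that a flype is realized by an ambient isotopy of $S \times I$ fixing $K$ setwise, so it cannot create new essential spheres or annuli in the complement; any simple closed curve on $S$ witnessing a failure of obvious primeness after the flype could be pulled back to one witnessing a failure beforehand, contradicting the obvious primeness established in the first step. Hence the final projection satisfies both conditions in the statement.
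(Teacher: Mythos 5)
Your overall structure matches the paper's: first verify that the cellular alternating diagram is weakly generalized alternating in the sense of Howie--Purcell, then apply flypes to make it twist-reduced. The main difference lies in how the WGA conditions are dispatched. The paper's proof checks the HP definition directly (reduced alternating, checkerboard colorable, representativity condition) and observes that the representativity requirement is \emph{trivially} satisfied because $S \times \{\frac{1}{2}\}$ is incompressible in $S \times I$, so there are simply no compressing disks to worry about. You instead invoke hyperbolicity to deduce obvious primeness; that is also a valid and necessary observation, but you should separately note the incompressibility point, since representativity (about compressing disks for the projection surface in the ambient $3$-manifold) and weak primeness (about disks in $S$ itself) are distinct conditions in the HP definition, and you appear to be conflating them. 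For the twist-reduction step, the paper simply cites \cite{HP}, which already shows that any WGA diagram becomes twist-reduced after flypes while remaining WGA. Your re-derivation has a small inaccuracy: a flype is not an ambient isotopy of $S \times I$ fixing $K$ setwise --- it moves $K$ to an isotopic copy $K'$, changing the projection. A cleaner phrasing is that the flyped diagram is still a cellular alternating diagram of the same hyperbolic link, so the same hyperbolicity argument applies afresh; but simply quoting the HP result, as the paper does, avoids the issue entirely.
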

\begin{proof}
If $K$ is a hyperbolic cellular alternating link in $S \times I$, then a cellular alternating diagram $\pi(K)$ is weakly generalized alternating (WGA) as in \cite{KP}, since it is reduced alternating, admits a checkerboard coloring, and trivially satisfies the representativity requirement of the WGA definition. Namely, the projection surface $S \times \{\frac{1}{2}\}$ is incompressible. So cellular alternating links admit WGA diagrams, and by \cite{HP}, any WGA diagram can be made to be twist reduced through flypes, so that the resulting diagram is twist reduced and still WGA.
\end{proof}
\begin{theorem} \label{closure in geometric topology}
Hyperbolic cellular alternating links and augmented cellular alternating links form the closure of hyperbolic cellular alternating links in $S \times I$ under the geometric topology.
\end{theorem}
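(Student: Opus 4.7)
The plan is to adapt Lackenby's argument from \cite{Lackenby} to the thickened-surface setting, using the volume bounds of \cite{HP} as restated in \cite{KP}. For the inclusion of augmented cellular alternating links in the closure, given a hyperbolic augmented cellular alternating link $Q = L \cup J_1 \cup \cdots \cup J_n$ in $S \cross I$, I would perform Dehn fillings with large twisting slopes $m_i$ on each augmenting component $J_i$. Each such filling replaces the twice-punctured disk bounded by $J_i$ with a $|m_i|$-crossing twist region in the resulting projection, producing a cellular alternating link whose complement is hyperbolic by Theorem~\ref{orientable theorem}. By Thurston's hyperbolic Dehn surgery theorem, as $\min_i |m_i| \to \infty$ these filled manifolds converge geometrically to $(S \cross I) \setminus Q$, with totally geodesic boundary preserved in the limit.

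For the other direction, let $(S \cross I) \setminus K_n$ be hyperbolic cellular alternating link complements converging geometrically to a finite-volume hyperbolic 3-manifold $M_\infty$ with totally geodesic boundary. Geometric convergence to a finite-volume limit forces $\mathrm{vol}((S \cross I) \setminus K_n)$ to be bounded above, and the lower volume bound of \cite{HP, KP} then bounds from above the twist number $t(K_n)$ of the twist-reduced cellular alternating projection guaranteed by Lemma~\ref{twist-reduced WGA}. For each $n$, I would form the augmented link $K_n^+$ by inserting a crossing circle around each twist region; twist-reducedness together with cellular alternation ensures that these crossing circles puncture non-adjacent complementary regions, have disjoint bounding disks, and are pairwise non-isotopic within each common pair of regions. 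Hence each $K_n^+$ satisfies Definition~\ref{main definition} and is hyperbolic by Theorem~\ref{orientable theorem}, and $K_n$ is recovered from $K_n^+$ by integral Dehn filling on the crossing circles with slopes given by the twist numbers of the corresponding regions.

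Since $t(K_n)$ is uniformly bounded, the twist-reduced $4$-valent graphs underlying the $K_n$ lie in a finite list of combinatorial types on $S$, so after passing to a subsequence all $K_n^+$ coincide with a fixed augmented cellular alternating link $K^+$ whose complement is a fixed hyperbolic 3-manifold $N^+ = (S \cross I) \setminus K^+$. Passing to a further subsequence, I would arrange that on each crossing circle of $K^+$ the Dehn filling slope either stabilizes to a fixed integer (using the fact that the slopes are integer-valued) or tends to infinity in absolute value. By Thurston's hyperbolic Dehn surgery theorem, the geometric limit of such a mixed Dehn filling sequence is the partial filling of $N^+$ in which the stabilized crossing circles are filled and the other crossing circles remain as cusps; this limit coincides with $M_\infty$ by uniqueness of geometric limits, and is either a cellular alternating link complement (when every slope stabilizes) or an augmented cellular alternating link complement (when at least one cusp remains unfilled).

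The main obstacle is verifying that the augmented link $K_n^+$ satisfies Definition~\ref{main definition}, particularly the non-isotopy condition for distinct crossing circles sharing the same pair of complementary regions. Non-adjacency and disjointness of bounding disks are immediate from the twist-reduced structure, but the non-isotopy condition requires a flype-based argument that reduces, via Lemma~\ref{lift lemma} and the Menasco--Thistlethwaite Flyping Theorem, to a standard fact about twist-reduced alternating projections. A secondary technical point, to be handled via the thick-thin decomposition for cusped hyperbolic 3-manifolds, is the stabilize-or-blow-up dichotomy for integer filling slopes along a geometrically convergent sequence.
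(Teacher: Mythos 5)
Your proposal follows essentially the same route as the paper: both adapt Lackenby's Corollaries 2 and 3 to the thickened-surface setting, using Lemma~\ref{twist-reduced WGA} to get a twist-reduced weakly generalized alternating diagram and the lower volume bound of \cite{HP} (as restated in Theorem~1.4 of \cite{KP}) to bound twist number by volume, then running the finiteness/Dehn-filling argument to identify the geometric limit as a cellular alternating or augmented cellular alternating link complement. The paper simply cites Lackenby for the remaining steps rather than spelling them out, while you fill in the Dehn-filling bookkeeping and flag the non-isotopy verification for the inserted crossing circles, but the substance of the argument is the same.
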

\begin{proof}

 As is shown in \cite{Lackenby} for the case of $S^3$, the cellular alternating link $K$ can be obtained by Dehn surgery on an augmented cellular alternating link $L$ with diagram $\pi(L)$ having at most $6t(\pi(K))$ crossings, where $t(\pi(K))$ is the twist number of the projection. 
 By Lemma \ref{twist-reduced WGA}, if $K$ is a hyperbolic cellular alternating link in $S \times I$, then there is a twist-reduced weakly generalized alternating projection $\pi(K)$ on $S \times \{\frac{1}{2}\}$. It then follows from Theorem $1.4$ of \cite{KP} that the twist number of $\pi(K)$ is bounded above by $\text{vol}(M \setminus K)$, so the number of crossings in $L$ is bounded. Afterwards, the proof follows exactly as those of Corollary 2 and Corollary 3 of \cite{Lackenby} by the fact that convergence of complete finite volume hyperbolic $3$-manifolds to a limit finite volume hyperbolic 3-manifold $M_\infty$ is characterized by Dehn surgery on $M_\infty$.
\end{proof}

Recall as constructed in Corollary \ref{rubber band links} that if $G$ is a cage graph on a closed orientable surface $S$, then the rubber band link $R = V_1  \cup \cdots V_m \cup E_1 \cup \cdots \cup E_n$ is hyperbolic with the same volume as the fully augmented cellular alternating link $Q = L \cup E_1' \cup \cdots \cup E_n'$, for a cellular alternating link $L$ in $M$. The link $L$ is hyperbolic in $M$ by \cite{SMALL2017}, and Dehn filling each augmenting component of $Q$ produces $L$. So, $vol(M \setminus L) < vol(M \setminus Q)$ (for instance, see Theorem $E.7.2$ of \cite{BP}), and the lower volume bounds on alternating links from \cite{Lackenby} and \cite{HP} apply to rubber band links in the thickened surface $S \times I$. In the special case $\chi(S) \ge 0$, then lower bounds from \cite{Purcell} and 
\cite{Kwon} exploit the fact the augmented alternating link $Q$ is fully augmented. \\

Following the lower bound developed in \cite{HP}, an upper bound on volume for twist-reduced weakly alternating links in the thickened surface was proved by E. Kalfagianni and J. Purcell in \cite{KP}. As before, these bounds do not apply directly to a rubber band link $R$ or to the fully augmented cellular alternating link $Q$ derived from the rubber band link. However, we can apply the upper bound to a sequence of cellular alternating links $\{L_i\}$ which approach $Q$ in the geometric topology by increasing the number of bigons in each twist region of $L$, corresponding to Dehn surgeries of increasing slope on augmenting components. These results are summarized in the following theorem relating the graph information of a rubber band link to its hyperbolic volume. There are also upper volume bounds from \cite{Lackenby}, \cite{KP} and \cite{Kwon} which apply to $L$, which we can show similarly to be upper volume bounds on $R$.\\

\begin{theorem}
Let $G = (V,E)$ be cage graph on the closed orientable surface $S$, and let $R$ be the rubber band link in $S \times I$ obtained from $G$. Let $\varepsilon = |E|$ denote the number of edges in $G$. 
\begin{enumerate}
\item If $\chi(S) = 2$, then 
\begin{align*}
   2(\varepsilon-1)v_\text{oct} \le \text{vol}(M \setminus R) \le 10(\varepsilon-1)v_{\text{tet}}
\end{align*}
    \item If $\chi(S) = 0$, then 
    \begin{align*}
       2\varepsilon v_{\text{oct}} \le \text{vol}(M \setminus R) \le 10  \varepsilon v_{\text{tet}}
    \end{align*}
    
    \item If $\chi(S) <0$, then
    \begin{align*}
      \frac{v_{oct}}{2}(\varepsilon - 3 \chi(S)) <  \text{vol}(M \setminus R) \le 6 \varepsilon v_{oct}
    \end{align*}
 
    \end{enumerate}

where $v_{oct} = 3.6638\dots$ is the volume of a regular ideal octahedron and $v_{tet} = 1.0149\dots$ is
the volume of a regular ideal tetrahedron.
\end{theorem}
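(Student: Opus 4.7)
The strategy is to transfer the problem to the fully augmented cellular alternating link $Q$ from Corollary \ref{rubber band links}, for which $\text{vol}(M\setminus R) = \text{vol}(M\setminus Q)$. Recall that $Q = L \cup E_1' \cup \cdots \cup E_\varepsilon'$, where $L$ is a reduced prime cellular alternating link with exactly one crossing per edge of $G$. Because $G$ has no parallel edges, no two crossings of $L$ can co-bound a bigon, so the twist number of a twist-reduced projection of $L$, which exists by Lemma \ref{twist-reduced WGA}, equals $\varepsilon$.

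For the lower bounds, observe that $L$ is obtained from $Q$ by Dehn filling along the meridians of each augmenting component $E_i'$, so strict monotonicity of hyperbolic volume under Dehn filling yields $\text{vol}(M\setminus L) < \text{vol}(M\setminus Q) = \text{vol}(M\setminus R)$. I then apply the appropriate twist-number lower bound to $L$: Lackenby's polyhedral estimate in $S^3$ from \cite{Lackenby} for $\chi(S) = 2$, Kwon's $T \times I$ estimate from \cite{Kwon} for $\chi(S) = 0$, and the Howie--Purcell estimate from \cite{HP} (as restated in \cite{KP}) for $\chi(S) < 0$. With twist number equal to $\varepsilon$, these specialize to the three displayed lower bounds; the $\varepsilon - 1$ in Case (1) reflects that the standard fully-augmented decomposition in $S^3$ uses $\varepsilon - 1$ regular ideal octahedra, and the $-3\chi(S)$ term in Case (3) is the Euler-characteristic contribution from the projection surface in the Howie--Purcell formula.

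For the upper bounds, I approximate $Q$ in the geometric topology by a sequence $\{L_k\}$ of cellular alternating links, each obtained from $L$ by inserting $k$ half-twists into every twist region, equivalently by performing $(1,k)$-Dehn filling on every augmenting cusp of $Q$. By Thurston's hyperbolic Dehn filling theorem, the manifolds $M \setminus L_k$ converge geometrically to $M\setminus Q$, so $\text{vol}(M\setminus L_k) \to \text{vol}(M\setminus Q)$. Each $L_k$ is itself a twist-reduced cellular alternating link with the same $\varepsilon$ twist regions, so the twist-number upper bounds of \cite{Lackenby}, \cite{Purcell}, \cite{Kwon}, and \cite{KP} apply uniformly in $k$, and passing to the limit gives the displayed upper bound in each case.

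The principal obstacle is the bookkeeping: one must check that each $L_k$ remains a prime, reduced cellular alternating link with twist number exactly $\varepsilon$, so that Lemma \ref{twist-reduced WGA} produces a valid weakly generalized alternating diagram to feed into \cite{KP}, and that the constants coming from Purcell's and Kwon's fully-augmented decompositions produce $10(\varepsilon - 1)v_{\text{tet}}$ and $10\varepsilon v_{\text{tet}}$ respectively, with the shift by one accounted for by the two extra spherical cusps in the $S^3$ decomposition. Beyond these verifications the argument is purely an assembly of known volume estimates together with the geometric identification provided by Corollary \ref{rubber band links}.
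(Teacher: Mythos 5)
Your upper-bound argument — approximate $Q$ in the geometric topology by cellular alternating links $L_k$ obtained by Dehn filling the augmenting cusps of $Q$, apply a twist-number upper bound to each $L_k$, and pass to the limit — is the same as the paper's and works; the paper does exactly this for $\chi(S)=2$ and $\chi(S)<0$, while for $\chi(S)=0$ it instead reads the bound off Kwon's Proposition 3.5 for fully augmented links in $T\times I$, which gives the same $10\varepsilon v_{\text{tet}}$.

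The lower bound is where the write-up has a genuine gap. You propose a uniform route: observe that $L$ is obtained from $Q$ by filling the meridians of the $E_i'$, invoke strict monotonicity of volume under Dehn filling to get $\text{vol}(M\setminus L)<\text{vol}(M\setminus Q)$, and then apply a twist-number lower bound to $L$. That route produces the constant only in Case (3), where the Howie--Purcell bound applied to the twist-reduced cellular alternating $L$ (with $t(\pi(L))=\varepsilon$) gives $\tfrac{v_{\text{oct}}}{2}(\varepsilon - 3\chi(S))$ and the Dehn-filling inequality transfers it to $Q$. In Cases (1) and (2), the displayed constants $2(\varepsilon-1)v_{\text{oct}}$ and $2\varepsilon v_{\text{oct}}$ are \emph{not} twist-number estimates on $L$: they are decomposition lower bounds for the fully augmented link complement $M\setminus Q$ itself (Proposition 3.6 of \cite{Purcell} in $S^3$ and Proposition 3.5 of \cite{Kwon} in $T\times I$), which use the ideal octahedral structure of a fully augmented link and require that the input link actually be fully augmented with $\varepsilon$ crossing circles. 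A twist-number lower bound applied to $L$ — Lackenby or its Agol--Storm--Thurston refinement, say — gives roughly $\tfrac{v_{\text{oct}}}{2}(\varepsilon-2)$, about a factor of four too small. Your own parenthetical about ``$\varepsilon-1$ regular ideal octahedra'' is exactly the fully-augmented-link bound, but it sits uneasily with the twist-number-on-$L$ framing you used, which does not deliver it. The paper flags this distinction explicitly: it remarks that for $\chi(S)=0$ the lower bound from Kwon applied directly to $Q$ is strictly stronger than what one gets by routing through the alternating sublink $L$. So the lower bound argument must split: for $\chi(S)\ge 0$ apply the fully-augmented-link bounds to $Q$ itself, using that $Q$ has exactly $\varepsilon$ crossing circles; only for $\chi(S)<0$, where no fully-augmented bound is cited, should you route through a twist-number bound on a Dehn filling of $Q$.
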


\begin{proof}
If $\chi(S) = 2$, then $G$ is a cage graph on the $2$-sphere. There is a stereographic projection of $S$ which takes $G$ to a planar graph in $S^3$. Then, by the same operations as in the proof of Corollary \ref{rubber band links}, we obtain an augmented cellular alternating link $Q$ in $S^3$. In fact, the link $Q$ is fully augmented, since $Q$ is composed of an underlying alternating link $L$ together with augmenting components $E_1', \cdots, E_m'$, so that each crossing of $L$ is surrounded by exactly one $E_i'$. Then, the lower volume bound on $S^3 \setminus Q$ follows by Proposition $3.6$ of \cite{Purcell} (introduced in \cite{futer2006dehn}). \\

The upper bound proven by I. Agol and D. Thurston in the appendix to \cite{Lackenby} applies only to the underlying alternating link $L$ of $Q$. We can show that this bound applies to $Q$ as well by Dehn surgery. That is, by Theorem $E.5.1$ of \cite{BP} there exists a sequence of manifolds $\{S \times I \setminus L_i \}$ obtained by hyperbolic Dehn filling the augmenting components $E_1', \cdots, E_n'$ of $Q$ in $S \times I$, such that $vol(S \times I \setminus L_i) \rightarrow vol(S \times I \setminus Q)$. Moreover, each $L_i$ is alternating and twist-reduced, since the Dehn fillings only increase the number of bigons in each twist region of $L$. Since the number of twist regions in $L$ is $\varepsilon$, then $vol(S \times I \setminus L_i ) \le 10(\varepsilon-1)v_{tet}$, and so $vol(S \times I \setminus Q) \le 10(\epsilon-1)v_{tet}$.\\

If $\chi(S) = 0$, the upper and lower bounds follow from Proposition 3.5 of \cite{Kwon} if we show that the link $Q$ obtained from $R$ as in Corollary \ref{rubber band links} is a fully augmented link with $|E|$ crossing circles. By the proof of this corollary, $Q$ is fully augmented cellular alternating, with the link components $E_1, \cdots, E_n$ of $R$ becoming the augmenting components $E_1', \cdots, E_n'$ of $Q$. Hence, the number of crossing circles $c$ in Proposition 3.5 of \cite{Kwon} is $|E|$. \\ 

We remark that when $\chi(S) = 0$, the lower bound on $vol(S \times I \setminus R)$ from \cite{Kwon} is stronger than the lower bound in \cite{HP}, because the former applies directly to $Q$, while the latter \cite{HP} can only be applied to the cellular alternating sublink $L$ of $Q$. The upper bounds given by \cite{Kwon} and \cite{HP} coincide. \\

If $\chi(S) < 0$, it follows by Lemma \ref{twist-reduced WGA} that $L$ admits a projection $\pi(L)$ that is twist reduced and WGA, since $L$ is cellular alternating by the proof of Corollary \ref{rubber band links}. Moreover, $t(\pi(L)) = \varepsilon$.\\

For the lower bound, consider that any hyperbolic link $L'$ obtained by Dehn filling the augmenting components $E_1', \cdots, E_n'$ of $Q$ has the same number of twist regions as $L$; moreover, $L'$ is alternating if $L$ is. As noted previously, a hyperbolic manifold $N$ obtained by Dehn filling another hyperbolic manifold $M$ satisfies $vol(M) > vol(N)$. Let $L'$ be a hyperbolic link in $S \times I$ obtained from Dehn filling the augmenting components of $Q$. Then,
\begin{align*}
    \frac{v_{oct}}{2}(\varepsilon - 3\chi(S) < vol(S \times I \setminus L') < vol(S \times I \setminus Q)
\end{align*}

For the upper bound, we again use Theorem $E.5.1$ of \cite{BP} to obtain a sequence of hyperbolic links $\{L_i\}$ in $S \times I$ such that $vol(S \times I \setminus L_i) \rightarrow vol(S \times I \setminus Q)$. Since each $vol(S \times I \setminus L_i) \le 10(\varepsilon-1)v_{tet}$, then $vol(S \times I \setminus Q) \le 10(\varepsilon-1)v_{tet}$.\\
\end{proof}

\bibliography{references}
\bibliographystyle{plain}

\end{document}